\let\NAT@parse\undefined
\newtheorem{thm}{Theorem}
\newtheorem{lem}{Lemma}
\newtheorem{rem}{Remark}
\newtheorem{definition}{Definition}
\newtheorem{prop}{Proposition}
\newtheorem{ex}{Example}
\newtheorem{cor}{Corollary}
\def \b {\beta}
\def \< {\langle}
\def \> {\rangle}
\def \O {\Omega}
\def \~{\tilde}
\def \b {\beta}
\def \O {\Omega}
\def \L {\mathcal{L}}
\def \M {\mathcal{M}}
\begin{document}
%=====================================================
%=====================================================
\title{Moment-Based Ensemble Control}%\cred{Foundation}
% \title{Ensemble Control using the Method of Moments}
% \title{A Geometric Framework for Moment-Based Ensemble Control Using Aggregated Measurements}%\cred{Foundation}

\author{Vignesh~Narayanan,~\IEEEmembership{Member,~IEEE,}
        Wei~Zhang,~\IEEEmembership{Member,~IEEE,}
        and~Jr-Shin~Li,~\IEEEmembership{Senior~Member,~IEEE}% <-this % stops a space
\thanks{*This work was supported in part by the National Science Foundation under the awards CMMI-1462796 and ECCS-1509342, and by the Air Force Office of Scientific Research under the award FA9550-17-1-0166.}
\thanks{V. Narayanan, W. Zhang, and J.-S. Li are with the Department
of Electrical and Systems Engineering, Washington University in St. Louis, St. Louis
MO, 63130 USA e-mail: vignesh.narayanan@wustl.edu, wei.zhang@wustl.edu, jsli@wustl.edu.}% <-this % stops a space
% \thanks{J. Doe and J. Doe are with Anonymous University.}% <-this % stops a space
%\thanks{Manuscript received April 19, 2005; revised August 26, 2015.}
}

\maketitle
%=====================================================
%=====================================================
%=====================  Abstract =====================
\begin{abstract}
	Controlling a large population, in the limit, a continuum, of structurally identical dynamical systems with parametric variations is a pervasive task in diverse applications in science and engineering. However, the severely underactuated nature and the inability to avail comprehensive state feedback information of such ensemble systems raise significant challenges in analysis and design of  ensemble systems. In this paper, we propose a moment-based ensemble control framework, which incorporates and expands the method of moments in probability theory %and statistics 
	to control theory. In particular, we establish an equivalence between ensemble systems and their moment systems in terms of control and their controllability properties by extending the Hausdorff moment problem from the perspectives of differential geometry and dynamical systems. The developments enable the design of moment-feedback control laws for closing the loop in ensemble systems using the aggregated type of measurements. The feasibility of this closed-loop control design procedure is validated both mathematically and numerically.

	% The task of steering a large population, in the limit, a continuum, of structurally identical dynamical systems with parametric variations, commonly referred to as an ensemble control problem, is prevent in numerous domains of science and engineering. However, the severely underactuated nature and the inability to avail comprehensive state feedback information of such ensembles offers significant challenges in solving ensemble control problems, especially, the design of feedback control signals. Driven by the hope for closing the loop of ensemble systems, in this paper, we propose a moment-based ensemble control framework, which incorporates the method of moments in probability theory and statistics into control theory. In particular, we establish an equivalence between ensemble systems and their moment systems in terms of controllability by extending the Hausdorff moment problem from the perspectives of differential geometry and dynamical systems. This investigation enables the design of moment-feedback control laws for ensemble systems via the aggregated type of measurements, and the feasibility of this close loop control design procedure is validated both mathematically and numerically.

\end{abstract}
% %=====================================================
\begin{IEEEkeywords}
Ensemble systems, Aggregated measurements, Aggregated feedback, Hausdorff moment problem.
\end{IEEEkeywords}
% %\IEEEpeerreviewmaketitle
% %================= Section : Introduction =================
 \section{Introduction}\label{sec: Introduction}
%=====================================================

Large populations of uncoupled or interconnected dynamical systems are pervasive in diverse scientific domains, such as quantum science and technology \cite{Dong10,Li_PNAS11}, power systems \cite{bomela_18_tcl}, neuroscience \cite{ching_13_control,li_13_control}, emergent behaviors \cite{smale_07_emergents}, and robotics \cite{becker_12_robotics}. These population systems, formally referred to as \emph{ensemble systems}, generally exhibit variations in the parameters characterizing the dynamics of individual dynamic units in the ensemble. Such variations arise either by nature (e.g., different weight and size of birds in a flock), manufacturing (e.g., the variability in the fabrication resulting in different mass, friction coefficients, etc., of robots in a swarm), or design (e.g., the application of gradient fields resulting in Larmor frequency dispersion in magnetic resonance imaging). Owing to their prevalence in diverse emerging applications and rich mathematical structures, ensemble control problems have attracted significant attention and formed a new paradigm in systems and control over past years. Theoretically, the notions of ensemble controllability and observability have been introduced and extensively investigated, especially for linear \cite{li_10_ltv,brockett_2000_stochastic,Helmke14,Li_TAC16,Dirr18,wei_18_controllability, zeng_16_moment,zeng_15_ensemble,zeng_16_sampled,Li_SICON19}, bilinear \cite{li_09_bloch,Beauchard10,chen_19_controllability,chen_20_ensemble,chen_19_structure,Li_TAC19,Li_SICON20_arXiv}, and some classes of nonlinear systems \cite{chen_19_structure}. Computationally, various numerical algorithms have been proposed for synthesizing robust and optimal control signals to steer ensemble systems between desired states \cite{Li_PNAS11,Li_ACC12_SVD,Gong16,Li_SICON17,Li_Automatica18,wei_15_uniform,zlotnik_16_phase,zlotnik_12_neuron,kuritz_18_ensemble,zeng_18_computation,chen_18_OT}.

The fundamental challenges associated with ensemble control problems lie in the underactuated nature and the lack of comprehensive state feedback information of the entire ensemble. These bottlenecks have naturally pushed the research in ensemble control theory towards the direction of pursuing open-loop and sparsely distributed control scenarios. %applied at the population level. 
However, in many cutting-edge applications involving the control of ensemble systems, such as neuronal networks, robot swarms, spin ensembles, and cellular oscillators, aggregated type of spatially sparse measurements, such as coarse or fragmented images and partial snapshots, can be obtained \cite{hong_19_novel,couceiro_12_low,quantum_measurement_prl_19,kuritz_18_ensemble}. The availability of such measurements then opens up the possibility for utilizing population-level feedback to close the loop in ensemble control systems. 
%\remove{Motivated by this xxx, in} 
In this paper, we adopt 
%\replace{the idea of the method of moments in the probability theory and statistics}
{the idea of statistical moments in probability theory to %introduce the notion of ensemble moments induced by aggregated measurements of an ensemble system and 
develop a moment-based ensemble control framework.} %\remove{, which builds the dynamic connection between the ensemble and the moment dynamics and facilitates the design of moment-feedback control laws to close the loop in ensemble control systems.}

% \paragraph*{Related works and Contributions} 
Approaches based on the use of ``moments'' have been introduced in systems theory, especially widely adopted in the context of stochastic control \cite{brockett_76_parametrically,yang_15_shapingcrowds}. On the other hand, several efforts have been made towards developing dynamic models and analyzing collective behaviors of deterministic populations using statistical moments \cite{zhang_18_modellingCollectBehavrMoment, singh_10_approximateChemical}. For example, an original work on controlling a homogeneous population of dynamical agents using their population density was proposed in \cite{brockett_12_notes}, and was later extended to control linear ensemble systems using their mean and variance through output feedback \cite{dirr_16_controlling} and, further, to analyze ensemble controllability and observability for time-invariant linear ensemble systems \cite{zeng_16_momentdyn, zeng_16_moment}. 
%moment-based approaches were presented to analyze ensemble observability and controllability for \cred{time-invariant} linear ensemble systems \cred{through their respective moment systems (or dynamics)} \cite{zeng_16_momentdyn, zeng_16_moment}. 

In contrast to the existing literature, the moment-based ensemble control framework proposed in this paper focuses on the analysis and manipulation of ensemble systems through the systems describing the dynamics of their moments induced by aggregated measurements. In particular, we categorize such measurements into two types - labeled and unlabeled, commonly found in practice, and associate ensemble systems with two different notions of moments, i.e., ensemble- and output-moments, respectively. By extending the classical moment problem from a geometric aspect, we establish a dynamic connection between an ensemble and its respective moment system, so that controlling an ensemble can be achieved by controlling its moment system. In addition, we show that both the ensemble system and the corresponding moment system share the same controllability property. Such equivalences enable and facilitate the design of closed-loop moment-feedback control laws by using aggregated measurements.

The paper is organized as follows. In the next section, we provide the background of ensemble control theory, and then introduce the notion of aggregated measurements to motivate the moment-based framework through the lens of probabilistic interpretations of deterministic ensemble systems. In Section \ref{sec:moment-space}, we review and extend the classical Hausdorff moment problem from a differential geometric viewpoint, which lays the foundation of our development. In Sections \ref{sec:Method_canonical} and \ref{sec:Method_general moment}, we develop the moment-based ensemble control approaches for the labeled and the unlabeled cases, respectively. % The development of the moment-based ensemble control framework for the labeled and unlabeled cases are then facilitated in Sections \ref{sec:Method_canonical} and \ref{sec:Method_general moment}, respectively. 
In Section \ref{sec:Examples}, the estabilished framework is utilized to design moment-feedback control laws for closing the loop in ensemble systems, and the feasibility of this control design method is validated by numerical similations.

\section{Ensemble system and its probabilistic interpretation via aggregated measurements}\label{sec: Background}
% \section{Ensemble systems and its probabilistic interpretation}\label{sec: Background}%Preliminaries
 %============================================================
\subsection{Ensemble systems}
An \emph{ensemble system} is a parameterized population of dynamical systems defined on a common manifold $M\subseteq\mathbb{R}^n$ of the form 
  \begin{equation}
  \label{eq:ensemble_dyn_generic}
 \frac{d}{dt}x(t,\beta) = F(x(t,\beta), \beta, u(t)), 
 \end{equation}
where $\beta$ is the parameter varying on $\Omega\subset\mathbb{R}^d$.
%on the same manifold $M\subseteq\mathbb{R}^n$ is called as an \emph{ensemble system}, and it is of the form
%  \begin{equation}
%  \label{eq:ensemble_dyn_generic}
% \frac{d}{dt}x(t,\beta) = F(x(t,\beta), \beta, u(t)), 
% \end{equation}
%where $\beta$ is the parameter varying on $\Omega\subset\mathbb{R}^d$. 
The parameter space $\Omega$ is generally assumed to be compact and it can be finite, countable or uncountable \cite{li_10_ltv,li_09_bloch}. Thus, the state-space of the ensemble system in \eqref{eq:ensemble_dyn_generic} is a space of $M$-valued functions defined on $\Omega$,  denoted by $\mathcal{F}(\Omega,M)$. The ensemble control problem deals with the design of a parameter-independent control input $u(t)\in\mathbb{R}^l$ which steers the ensemble system from an initial profile $x_0\in\mathcal{F}(\Omega,M)$ to a desired final profile $x_F\in\mathcal{F}(\Omega,M)$ in a finite time $T$. To understand ensemble control systems, we first introduce the notion of ensemble controllability, which describes the ability of an ensemble control law to manipulate the entire ensemble on its state-space in an approximating sense.

%%==============  Definition 1 =====================================
 \begin{definition}[Ensemble controllability]
 \label{def:ensmeble_controllability}
 The system in \eqref{eq:ensemble_dyn_generic} is said to be \emph{ensemble controllable} on $\mathcal{F}(\Omega,M)$, if for any $\varepsilon>0$ and starting with any initial profile $x_0\in\mathcal{F}(\Omega,M)$, there exists a control law $u(t)$ that steers the system into an $\varepsilon$-neighborhood of a desired target profile $x_F\in\mathcal{F}(\Omega,M)$ in a finite time $T>0$, i.e., $d(x(T,\cdot),x_F(\cdot))<\varepsilon$, where $d:\mathcal{F}(\Omega,M)\times\mathcal{F}(\Omega,M)\rightarrow\mathbb{R}$ is a metric on $\mathcal{F}(\Omega,M)$.
\end{definition}
%%==============  Definition 1 =====================================
Note that ensemble controllability is defined in the sense of approximation so that the topology, or equivalently the metric $d$, plays a crucial role in the study of ensemble systems. Also note that, in Definition \ref{def:ensmeble_controllability}, the final time $T$ may depend on the approximation error $\varepsilon$. %In this paper, we are particularly interested in %, which also marks an essential difference between ensemble control theory and classical control theory
To motivate the moment-based framework for tackling ensemble control problems, in the next section, we formally introduce the concepts of labeled and unlabeled aggregated measurements for ensmeble systems as well as their distinction.

\subsection{Aggregated measurements and labels of ensemble systems}\label{subsec:aggre_measurements}
%=======================================================
In many emerging applications, though placing dedicated sensors to monitor the evolution of the states of individual systems in a population is not feasible, an aggregated, population-level measurement can be obtained (see Fig. \ref{fig:aggregated_neuron_spikes}). We refer to these types of measurements as \emph{aggregated measurements}, which are formally introduced in Definition \ref{def:aggregated_measurements}. For instance, in a typical neural recording \cite{hong_19_novel}, the spiking activity of a population of neurons can be recorded as a whole. In these recordings, the number of data-points acquired varies with each measurement instant, and in some cases, each data-point is anonymized, i.e., a data-point corresponding to a neural spike, and the neuron (system) in the population which generated it, may not be identified together. In this context, we refer to the parameter $\b$ in \eqref{eq:ensemble_dyn_generic} as a \emph{label} for the ensemble system. We then categorize an aggregated measurement as \emph{unlabeled} when the data-points in the measurement acquired cannot be associated with the specific system in the ensemble that generated it. Alternatively, when it is possible to label the data \cite{couceiro_12_low,kuritz_18_ensemble} and associate the data with the system that generated it, we refer to the measurement as \emph{labeled} measurement data. 
%To fix ideas, we formally define these categories of aggregated measurements as follows:
% ====================  Definition 2 ==============================
\begin{definition}[Aggregated measurements]
\label{def:aggregated_measurements}
Given an ensemble system as in \eqref{eq:ensemble_dyn_generic}, a \emph{labeled aggregated measurement} of the system at time $t$ is
a set of 2-tuples composed of the system labels and observations given by
%observations, $Y_l(t)$, composed of tuples, satisfying
\begin{align}\label{eq:aggregated_measurement_labeled}
Y_l(t):= \{(\b, h(\beta,x(t,\beta)))\mid \beta\in \Omega_t\},
\end{align}
and an \emph{unlabeled aggregated measurement} of the system at time $t$ is a set of its observations given by %, $Y_u(t)$,as 
\begin{align}\label{eq:aggregated_measurement_unlabeled}
Y_u(t):= \{h({\beta},x(t,\beta))\mid \beta\in \Omega_t\},
%Y_u(t):= \{(h(\beta,x(t,\beta)))\mid \beta\in \Omega_t\},
\end{align}
where $\Omega_t \subseteq \Omega$, %$Y_l(t)\subseteq\O_t\times\mathbb{R}^r$, $Y_u(t) \subseteq \mathbb{R}^{p}$, 
and ${h:\Omega\times M} \to \mathbb{R}^r$ is an observer or output function of the system.%\replace{a smooth function}{\color{red}%,  and $d_{t} = {|\Omega_t|}$, the cardinality of $\Omega_t$
\end{definition}%we can observe/measure
%===================================================================
%%=======================  Figure 1 ==================================
 \begin{figure}[ht]
     \centering
     \includegraphics[width=0.9\linewidth,keepaspectratio]{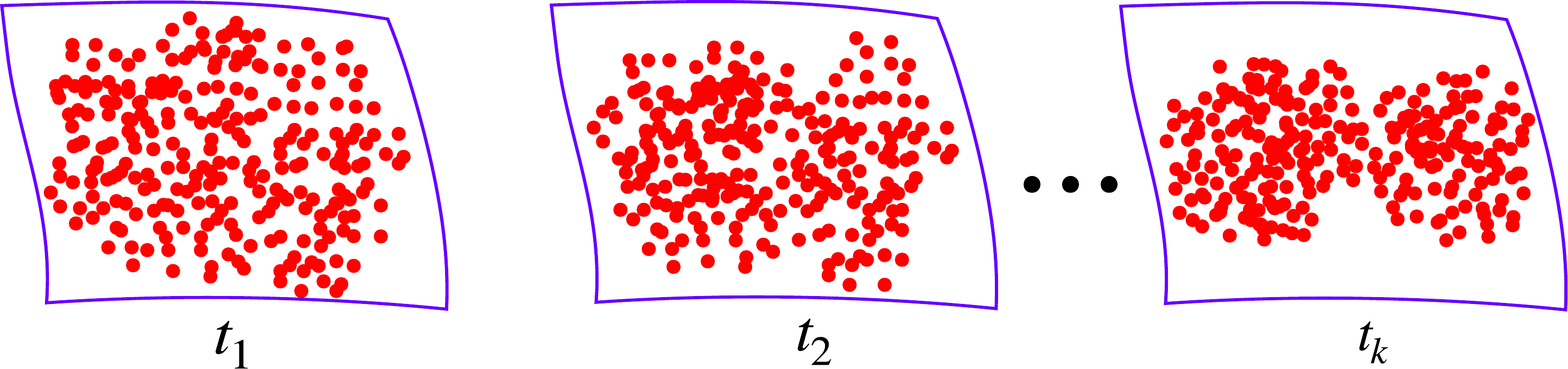}
     \caption{\footnotesize \noindent Illustration of aggregated measurements. The snapshots of the observed variables at different time instants may includes varying number of observations (represented using the time-dependent set $\Omega_t$) and these measurements, typically, cannot be associated with any particular system in the ensemble.}
     \label{fig:aggregated_neuron_spikes}
 \end{figure}
%=======================================================================
The availability of aggregated measurements in diverse application domains, while providentially paving the way to help close the feedback control loop for ensemble systems, introduces some challenges. In particular, the set $\Omega_t$ is typically a proper, finite subset of $\Omega$, and varying with time. As a result, the ensemble control problem cannot be approached in a traditional manner through classical state-feedback. %In this context, developing a control framework to accommodate the aggregated measurements to solve ensemble control tasks, e.g., control analysis and design is an open problem.
In this context, effectively utilizing the aggregated measurements to close the feedback loop in ensemble systems necessitates a systematic framework, which has to be robust to the challenges introduced by the labeled and unlabeled aggregated measurements. To mitigate these challenges and establish such a rigorous framework, in this paper, we focus on instituting a connection between the ensemble system and the concept of statistical moments.

%==================================================================
\subsection{Probabilistic interpretations of deterministic ensemble systems}% via moments}
%==================================================================
% from the theory of probability and statistics. %Specifically, we interpret the ensemble system from a probabilistic viewpoint and then reformulate the ensemble control problem in terms of the moments induced by the aggregated measurements associated with an ensemble system describing the distribution of the system states in the parameter space. In this viewpoint, we interpret the ensemble states trajectories as time-evolution of this distribution and the aggregated measurements as samples drawn from this distribution described by the ensemble states, which allows for a systematic analysis and design of control inputs using the aggregated measurements. With this interpretation, in this paper, we establish a geometric framework to establish a relationship between the state-space of the ensemble system and the space of the associated ensemble moments.
The method of moments is commonly used in the theory of probability and statistics for the purpose of characterizing probability distributions of random variables by sequences of numbers, called the moments of the random variables. Essentially, by regarding random variables as measurable functions on spaces with probability measures, this pertains to employing techniques developed for infinite sequences and series to analyze functions and measures.
%\cred{The method of moments is commonly used in the theory of probability and statistics for the purpose of characterizing probability distributions by sequences of numbers representing the behavior of associated random variables. Essentially, this pertains to employing a sequence of numbers (moments) for analyzing and characterizing a function (distribution/random variables) association with it.} 
%Since an ensemble system state is an $M$-valued function, 
Since ensemble systems are defined on function spaces, the method of moments can be integrated into ensemble control problems, which also gives rise to probabilistic interpretations of ensemble systems. 

Specifically, for an ensemble system indexed by the label $\beta$ as in \eqref{eq:ensemble_dyn_generic}, we can treat $\beta$ as a random variable taking values on the parameter space $\O$. In this way, the state variable $x(t,\beta)$ can interpreted in two different ways: deterministically, we consider $x(t,\beta)$ as the probability density function (with respect to the Lebesgue measure) of $\beta$ so that the ensemble system characterizes the dynamics of the probability distribution of $\beta$; probabilistically, we can view $x(t,\beta)$ as a random variable with $\beta$ denoting an outcome in the sample space $\Omega$, in which case, %every solution of the ensemble system becomes a sample path of a stochastic process. 
the solution of each individual system in this ensemble is a sample path of the stochastic process $\{x(t,\beta):t\geq0\}$. These two different viewpoints allow for formulating the ensemble control problem for the labeled and unlabeled cases, respectively, and also lead to two different notions of moments for ensemble systems, the ensemble and output moments, with respect to the system label $\beta$ and the state $x(t,\cdot)$, respectively. Specifically, the ensemble moment characterizes the ensemble system on a microscopic level, reflecting the dynamics of each individual system. On the contrary, the output moment describes the ensemble on a macroscopic level, concerning only with the overall distribution of the ensemble system on the state-space of functions defined on $\Omega$.

The foundation of the  moment method for ensemble systems is laid by the Hausdorff moment problem, since we consider $\O$ to be a compact space. In the next section, we conduct a brief survey of the Hausdorff moment problem, and extend some of the existing results for this problem to bridge the gap between the  moment and the ensemble control problems. In particular, we shed some light on the geometric characterization of the space of moment sequences to establish a link between the state-space of the ensemble system and the space of ensemble moments.

%\cb{Before introducing the notions of ensemble moments and the moment system associated with an ensemble system, in the next section, we  briefly introduce the classical Hausdorff moment problem. Using this classical result, we derive some useful results to characterize the space of moments, which will be used to bridge the gap between the state-space of the ensemble system and the space of ensemble moments.}
%======================= Section:  Moment space ===================
\section{Geometry of Moment Spaces}\label{sec:moment-space}
%======================= Section:  Moment space ===================
The intimacy between differential geometry and control theory dates back to the early 1970s, when techniques of differential calculus on manifolds were introduced to investigate fundamental properties of control systems, especially nonlinear systems \cite{brockett_14_early}. Since the state-spaces of ensemble systems are generally infinite-dimensional spaces, it is inevitable to involve Banach manifold techniques in the study of such systems. In order to initiate such an analysis, in this section, we introduce the Hausdorff moment problem from a geometric perspective, and in particular, to adopt its results for the ensemble systems, we extend them to the Banach space of $L^p$-functions, which lays the foundation for the moment-based ensemble control framework. %pioneer control theorists, such as Roger Brockett, H\'{e}ctor Sussmann, and Velimir Jurdjevic, introduced A short review of some of the basic definitions and ideas from Banach manifold theory are presented in Appendix \ref{appd:Banach_manifold}.

 %To fix ideas, we consider the Banach space $L^p(\O,\mathbb{R}^n)=\{y:\O\rightarrow\mathbb{R}^n\mid\|y\|_{p}<\infty\}$, where the $L^p$-norm $\|\cdot\|_{p}$ is defined as $\|y\|_{p}=\big(\int_\O |y(\b)|^pd\b\big)^{1/p}$ for $1\leq p<\infty$ and $\|y\|_{\infty}=\inf\big\{a>0\mid\lambda(\{\beta\in\Omega\mid|y(\b)|>a\})=0\big\}$ with $|\cdot|$ the Euclidean norm and $\lambda$ the Lebesgue measure
%==================================================================
\subsection{Hausdorff moment problem and its extension to the $L^p$-space}
\label{sec:Hausdorff_moment_problem}
%==================================================================
The moment problem was first systematically formulated by Steiljits as the problem of finding a probability distribution on the interval $[0,\infty)$, given the sequence of moments of the distribution, and it dates back to as early as 1873 \cite{akhiezer_65_classical}. This problem has attracted considerable attention and subsequently, has been thoroughly studied under different settings yielding several results including that of Hausdorff, and others. 

The Hausdorff moment problem, named after the German mathematician Felix Hausdorff, concerns with the existence and uniqueness of a signed Borel measure $\mu$ on a compact interval $[a,b]$, so that 
$$m_k=\int_a^b\beta^kd\mu(\beta).$$
holds for all $k\in\mathbb{N}$, where $\mathbb{N}$ denotes the set of natural numbers. In this case, the sequence $m=(m_0,m_1,\dots)$ is called the \emph{moments} of the \emph{representing measure} $\mu$. %\cred{state as one dimension ?}

Note that it suffices to study the moment problem for the unit interval $[0,1]$, since the linear transformation $\psi(\b)=a+(b-a)\beta$ maps $[0,1]$ bijectively onto $[a,b]$ such that the measure $\mu$ on $[a,b]$ is uniquely determined by the measure $\mu'$ on $[0,1]$ as $\mu(B)=\mu'(\psi^{-1}(B))$ for any Borel subset of $[a,b]$, where $\mu$, denoted by $\mu=\psi_\#\mu'$, is called the \emph{pushforward measure} of $\mu'$ by $\psi$. As a result, it can be shown that $\int_a^bhd\mu=\int_0^1h\circ\psi d\mu'$ holds for any Borel measurable function $h:[a,b]\rightarrow\mathbb{R}$ \cite{Bogachev07}. Applying this to the function $h(\b)=\b^k$ yields a relation between the moment sequences $m'=(m_0',m_1',\dots)$ of $\mu'$ and  $m=(m_0,m_1,\dots)$ of $\mu$ as
\begin{align*}
m_k&=\int_a^b\b^kd\mu(\b)=\int_0^1[a+(b-a)\b]^kd\mu'(\b)\\
&=\sum_{i=0}^k{k\choose i}a^{k-i}(b-a)^{i}m'_i,\quad k\in\mathbb{N}.
\end{align*}
%\end{rem}

%======================================================
To solve the moment problem, Hausdorff, in his work, introduced a sequence of difference operators $\Delta^0$, $\Delta^1$, $\dots$ for the moment sequence $m=(m_0,m_1,\dots)'$ iteratively as
% \begin{align*}
$\Delta^0m_k=m_k$ and  $\Delta^nm_k=\Delta^{n-1}m_k-\Delta^{n-1}m_{k+1}$.
% \end{align*}
The explicit expression of $\Delta^n$ can then be revealed by induction on $n$, which follows
\begin{align*}
\Delta^n m_k=\sum_{i=0}^n{n \choose i}(-1)^im_{k+i},\quad k,n\in\mathbb{N}.
\end{align*}
The solution of the Hausdorff moment problem can then be represented in terms of these difference operators as follows: \emph{Given a sequence of real numbers $m=(m_0,m_1,\dots)$, there exists a signed Borel measure $\mu$ such that $m_k=\int_0^1\beta^kd\mu(\b)$ for all $m\in\mathbb{N}$ if and only if $\sum_{k=0}^n\big|{n \choose k}\Delta^{n-k}m_k\big|\leq C$ for all $n\in\mathbb{N}$ and some constant $C$ independent of $n$}, which was proposed and proved by Hausdorff in 1932 \cite{hausdorff_23_momentprobleme}.

Following Hausdorff's work, Hildebrandt and Schoenberg extended the moment problem to the multi-dimensional variable case in 1933 \cite{Hildebrandt33}. Specifically, if $\beta=(\beta_1,\dots,\beta_d)$ takes values in a compact subset $\Omega\subset\mathbb{R}^d$, then the moment sequence is parameterized by a multi-index $k=(k_1,\dots,k_d)\in\mathbb{N}^d$. In particular, we define the notation $\b^k$ to be the monomial $\b^k=\b_1^{k_1}\cdots\b_d^{k_d}$ in $(\b_1,\dots,\b_d)$ of degree $|k|=k_1+\cdots+k_d$, and correspondingly, the moment sequence $m_k=\int_\Omega\beta^kd\mu(\b)$ for a Borel measure on $\Omega$ is a multi-sequence. To illustrate the main idea, we consider the case $d=2$ with $\Omega=[0,1]\times[0,1]=[0,1]^2$ the unit square, 
%Similar to the one-dimensional case above, we restrict our attention to the $L^2$-solution, i.e., $d\mu(\b)=\varphi(\b)d\b$ for some $\varphi\in L^2(\Omega)$. 
and extend the definition of the difference operators to double sequences as $\Delta^nm_k=\Delta_1^{n_1}\Delta_2^{n_2}m_{k_1k_2}$, where $\Delta_i^{n_i}$ is the one-dimensional difference operator applied to the index $k_i$ for each $i=1,2$. 

%===============================================
\begin{lem}
\label{lem:Hausdorff_2d}
Given a double sequence of real numbers $m_k$, $k=(k_1,k_2)\in\mathbb{N}^2$, there exists a signed Borel measure $\mu$ on $\O=[0,1]^2$ such that $m_k=\int_\O\beta^kd\mu(\b)$
for all $k\in\mathbb{N}^2$ if and only if
\begin{align}
\label{eq:Hausdorff_2d}
\sum_{k_1,k_2=0}^{n}\Big|{n \choose k_1}{n \choose k_2}\Delta_1^{n-k_1}\Delta_2^{n-k_2}m_{k_1k_2}\Big|\leq C
\end{align}
for all $n\in\mathbb{N}$ and some constant $C$ independent of $n$. 
\end{lem}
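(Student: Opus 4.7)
My plan is to lift the one-variable Bernstein-polynomial proof of the classical Hausdorff theorem to the tensor-product setting on $\O=[0,1]^{2}$. The engine of both directions is the identity
\begin{equation*}
\binom{n}{k_{1}}\binom{n}{k_{2}}\Delta_{1}^{n-k_{1}}\Delta_{2}^{n-k_{2}}m_{k_{1}k_{2}}=\int_{\O}B_{n,k_{1}}(\b_{1})\,B_{n,k_{2}}(\b_{2})\,d\mu(\b_{1},\b_{2}),
\end{equation*}
where $B_{n,k}(\b)=\binom{n}{k}\b^{k}(1-\b)^{n-k}$ is the $k$th Bernstein basis polynomial of degree $n$. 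I would derive it by substituting $m_{k}=\int\b^{k}d\mu$, writing each $\Delta_{i}^{n-k_{i}}$ as an alternating binomial sum, and applying the binomial theorem to $(1-\b_{i})^{n-k_{i}}$ separately in each variable.

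Necessity then follows immediately. Given a representing measure $\mu$, summing the absolute values of the identity over $k_{1},k_{2}=0,\ldots,n$ and using $B_{n,k}\geq 0$ together with the partition of unity $\sum_{k=0}^{n}B_{n,k}(\b)=1$ on $[0,1]$ yields $\sum_{k_{1},k_{2}}|\binom{n}{k_{1}}\binom{n}{k_{2}}\Delta_{1}^{n-k_{1}}\Delta_{2}^{n-k_{2}}m_{k_{1}k_{2}}|\leq|\mu|(\O)$ via Fubini, so \eqref{eq:Hausdorff_2d} holds with $C=|\mu|(\O)$.

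For sufficiency I would construct the representing measure as a weak-$*$ limit. Associate with each $n$ the discrete signed measure
\begin{equation*}
\mu_{n}:=\sum_{k_{1},k_{2}=0}^{n}\binom{n}{k_{1}}\binom{n}{k_{2}}\Delta_{1}^{n-k_{1}}\Delta_{2}^{n-k_{2}}m_{k_{1}k_{2}}\,\d_{(k_{1}/n,k_{2}/n)},
\end{equation*}
whose total variation is uniformly bounded by the hypothesis \eqref{eq:Hausdorff_2d}. Since $\O$ is a compact metric space, $C(\O)$ is separable and the Banach--Alaoglu theorem furnishes a weak-$*$ convergent subsequence $\mu_{n_{j}}\to\mu$ in the dual of $C(\O)$, with $|\mu|(\O)\leq C$. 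To check that $\mu$ has the prescribed moments I would compute $\int\b_{1}^{j_{1}}\b_{2}^{j_{2}}\,d\mu_{n}$ directly: reorganizing the double sum using the Vandermonde identity $\binom{n}{k}\binom{n-k}{s-k}=\binom{n}{s}\binom{s}{k}$ and the standard finite-difference identity $\sum_{k=0}^{s}(-1)^{k}\binom{s}{k}k^{j}=(-1)^{s}s!\,S(j,s)$ (Stirling numbers of the second kind, which vanish for $s>j$), the double sum collapses to an explicit linear combination of the $m_{s_{1}s_{2}}$ with $s_{i}\leq j_{i}$ whose coefficients $\binom{n}{s_{i}}s_{i}!\,n^{-j_{i}}S(j_{i},s_{i})$ tend, as $n\to\infty$, to $1$ when $s_{i}=j_{i}$ and to $0$ otherwise. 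Combining this with the weak-$*$ convergence of $\mu_{n_{j}}$ applied to the continuous monomial $\b_{1}^{j_{1}}\b_{2}^{j_{2}}$ identifies $\int\b_{1}^{j_{1}}\b_{2}^{j_{2}}\,d\mu=m_{j_{1}j_{2}}$ for every $(j_{1},j_{2})\in\mathbb{N}^{2}$.

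The step I expect to be the main obstacle is this last moment-identification computation: the tensor structure of the monomials and of $\mu_{n}$ makes the two indices decouple in principle, but justifying the interchange of the outer sum with the difference operators and tracking all cross terms cleanly requires care. Once that is settled, Stone--Weierstrass density of polynomials in $C(\O)$ implies that any weak-$*$ accumulation point of $\{\mu_{n}\}$ is the same $\mu$, so in fact the full sequence converges and the representing measure is unique.
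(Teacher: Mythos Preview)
The paper does not actually prove this lemma: its entire proof is the line ``See \cite{Hildebrandt33}'', deferring to Hildebrandt and Schoenberg's 1933 paper. So there is nothing in the paper to compare your argument against beyond the citation.

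Your plan is the standard Bernstein-polynomial/weak-$*$ compactness route and is correct. The integral identity you state for $\binom{n}{k_{1}}\binom{n}{k_{2}}\Delta_{1}^{n-k_{1}}\Delta_{2}^{n-k_{2}}m_{k_{1}k_{2}}$ is exactly right (indeed the paper uses its one-variable analogue in the proof of Proposition~\ref{prop:Hausdorff_2d} in Appendix~\ref{appd:Hausdorff_2d}), and your necessity argument with the partition of unity and the bound $C=|\mu|(\O)$ is clean. For sufficiency, the discrete measures $\mu_{n}$ you write down have total variation bounded by \eqref{eq:Hausdorff_2d}, Banach--Alaoglu applies since $C(\O)$ is separable, and your moment-identification computation via the Vandermonde rearrangement and Stirling numbers is the classical one: the tensor structure of both the monomials $\b_{1}^{j_{1}}\b_{2}^{j_{2}}$ and the weights of $\mu_{n}$ makes the two index sums factor completely, so you are really just doing the one-dimensional calculation twice. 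The asymptotics $\binom{n}{s}s!\,n^{-j}S(j,s)\to\delta_{sj}$ as $n\to\infty$ are straightforward. Your final remark about uniqueness via Stone--Weierstrass is correct and gives convergence of the full sequence, though the lemma as stated only asks for existence.
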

\begin{proof}
See \cite{Hildebrandt33}.
\end{proof}

In the context of ensemble control problems considered in this work, we are particularly interested in ensemble systems defined on the Banach space of $L^p$-functions  $L^p(\O,\mathbb{R}^n)=\{\varphi:\O\rightarrow\mathbb{R}^n\mid\|\varphi\|_{p}<\infty\}$, where $\|\varphi\|_p=\big(\int_\O |\varphi|^pd\lambda\big)^{1/p}$ for $1\leq p<\infty$ and $\|\varphi\|_{\infty}=\inf\big\{a>0\mid\lambda(\{\beta\in\Omega\mid|\varphi(\b)|>a\})=0\big\}$
is the $L^p$-norm of $\varphi$, $|\cdot|$ is the Euclidean norm on $\mathbb{R}^n$, and $\lambda$ is the Lebesgue measure on $\Omega$. For the case $n=1$, it is a customary to denote the space of real-valued $L^p$-functions defined on $\O$ by $L^p(\O)$. In the following, to tailor the Hausdorff moment problem to this $L^p$-space setting, we would like to study moment sequences of Borel measures $\mu$ that are absolutely continuous with respect to the Lebesgue measure $\lambda$, and hence $d\mu=\varphi d\lambda$ holds for some $\varphi\in L^p(\O,\mathbb{R}^n)$ \cite{folland_13_real}. In the following, we extend the results in Lemma \ref{lem:Hausdorff_2d} to derive the necessary and sufficient condition for the existence and uniqueness of the real-valued $L^2$-solution of the Hausdorff moment problem.
%we are interested in the $L^p$-setting of the moment problem.  In particular, we are interested in the Banach space $L^p(\O,\mathbb{R}^n)=\{y:\O\rightarrow\mathbb{R}^n\mid\|y\|_{p}<\infty\}$, where the $L^p$-norm $\|\cdot\|_{p}$ is defined as $\|y\|_{p}=\big(\int_\O |y(\b)|^pd\b\big)^{1/p}$ for $1\leq p<\infty$ and $\|y\|_{\infty}=\inf\big\{a>0\mid\lambda(\{\beta\in\Omega\mid|y(\b)|>a\})=0\big\}$ with $|\cdot|$ the Euclidean norm and $\lambda$ the Lebesgue measure. Therefore, we are particularly interested in Borel measures $\mu$, which are absolutely continuous with respect to the Lebesgue measure so that $d\mu(\b)=\varphi(\b)d\b$ for some function $\varphi\in L^p(\O)$. In the following, by using Lemma \ref{lem:Hausdorff_2d}, we are able to establish the necessary and sufficient condition for the existence and uniqueness of an $L^2$-function for a given moment sequence $m_k$, $k\in\mathbb{N}^2$.% representing the function $\varphi$.% is given by $m_k=\int_\Omega\b^k\varphi(\b)d\b$. 

\begin{prop}
\label{prop:Hausdorff_2d} A double sequence of real numbers $m_k$ indexed by $k=(k_1,k_2)\in\mathbb{N}^2$ is the moment sequence of a function $\varphi\in L^2(\O)$, i.e., 
\begin{align}\label{eq:moment_canonical_Prop_1}
m_k=\int_\O\beta^k\varphi(\b)d\b,    
\end{align}
if and only if 
\begin{align}
\label{eq:Hausdorff_2d}
(n+1)\sum_{k=0}^{n}\Big[{n \choose k}\Delta^{n-k}m_k\Big]^2\leq C
\end{align}
for all $n=(n_1,n_2)\in\mathbb{N}^2$ and some $C$ independent of $n$, where $(n+1)=(n_1+1)(n_2+1)$ and ${n \choose k}={n_1 \choose k_1}{n_2 \choose k_2}$. Moreover, the function $\varphi\in L^2(\O)$ representing the moment sequence $m_k$ is unique.
\end{prop}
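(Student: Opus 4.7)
The plan is to route the $L^2$ moment problem through the tensorized Bernstein polynomials
\begin{equation*}
B_{n,k}(\b) := \binom{n_1}{k_1}\binom{n_2}{k_2}\,\b_1^{k_1}(1-\b_1)^{n_1-k_1}\,\b_2^{k_2}(1-\b_2)^{n_2-k_2},
\end{equation*}
and to exploit the Durrmeyer-type operator they generate on $L^2(\O)$. The algebraic engine is the identity $c_{n,k}:=\binom{n}{k}\Delta^{n-k}m_k=\int_\O B_{n,k}(\b)\varphi(\b)\,d\b$, valid whenever $m_k=\int_\O\b^k\varphi\,d\b$; it follows by expanding $\Delta_1^{n_1-k_1}\Delta_2^{n_2-k_2}$ via the explicit binomial formula recorded before Lemma~\ref{lem:Hausdorff_2d} and collecting factors. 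I will use freely the two inherited properties $\sum_{k=0}^n B_{n,k}\equiv 1$ and $\int_\O B_{n,k}\,d\b=1/(n+1)$.

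For necessity, Cauchy--Schwarz applied to the positive measure $B_{n,k}(\b)\,d\b$, of total mass $1/(n+1)$, yields $c_{n,k}^2\leq\frac{1}{n+1}\int_\O B_{n,k}\varphi^2\,d\b$. Summing over $k$ and collapsing the partition of unity $\sum_k B_{n,k}=1$ delivers $(n+1)\sum_k c_{n,k}^2\leq\|\varphi\|_2^2$, so the stated bound holds with $C=\|\varphi\|_2^2$.

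For sufficiency, I would first observe that Cauchy--Schwarz on the $(n+1)$-term sum gives $\sum_k|c_{n,k}|\leq\sqrt{(n+1)\sum_k c_{n,k}^2}\leq\sqrt{C}$, which is exactly the Hildebrandt--Schoenberg hypothesis of Lemma~\ref{lem:Hausdorff_2d}; hence a signed Borel measure $\mu$ on $\O$ with moments $\{m_k\}$ already exists. To upgrade $\mu$ to an absolutely continuous measure with $L^2$ density, I would construct the Durrmeyer--Bernstein approximants $\varphi_n:=(n+1)\sum_{k=0}^n c_{n,k}B_{n,k}$ and control them by pointwise Jensen's inequality against the probability weights $B_{n,k}(\b)$:
\begin{equation*}
\|\varphi_n\|_2^2\leq(n+1)^2\int_\O\sum_k c_{n,k}^2 B_{n,k}(\b)\,d\b \;=\; (n+1)\sum_k c_{n,k}^2\leq C.
\end{equation*}
Weak $L^2(\O)$ compactness then produces a subsequence $\varphi_{n_j}\rightharpoonup\varphi\in L^2(\O)$ along any diagonal $n_1=n_2\to\infty$.

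The delicate step, and the one I expect to be the main obstacle, is matching this weak limit $\varphi$ with the density of $\mu$. Invoking $c_{n,k}=\int_\O B_{n,k}\,d\mu$ (apply the engine identity to $\mu$ in place of $\varphi\,d\lambda$) and a Fubini swap, one checks that for every $f\in C(\O)$,
\begin{equation*}
\int_\O f\,\varphi_n\,d\b \;=\; \int_\O (D_nf)\,d\mu,\qquad (D_nf)(\b):=(n+1)\sum_k B_{n,k}(\b)\int_\O B_{n,k}f\,d\b,
\end{equation*}
where $D_n$ is the self-adjoint Durrmeyer operator. The uniform convergence $D_nf\to f$ on $\O$ for $f\in C(\O)$---which I would establish by first verifying it on monomials via a direct beta-integral computation, then extending by the Weierstrass approximation theorem together with the contractivity bound $\|D_n\|_{C\to C}\leq 1$---combined with $|\mu|(\O)<\infty$, forces $\int_\O f\,\varphi_n\,d\b\to\int_\O f\,d\mu$. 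Weak $L^2$ convergence simultaneously gives $\int_\O f\,\varphi_n\,d\b\to\int_\O f\varphi\,d\b$, so $\int_\O f\varphi\,d\b=\int_\O f\,d\mu$ on continuous test functions, whence $\mu=\varphi\,d\lambda$ and $\varphi\in L^2(\O)$ represents $\{m_k\}$. Uniqueness follows because any $L^2$ function orthogonal to every monomial $\b^k$ is orthogonal to all polynomials and, by density of polynomials in $L^2(\O)$, vanishes almost everywhere.
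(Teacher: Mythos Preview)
Your proposal is correct. The necessity argument, the invocation of Lemma~\ref{lem:Hausdorff_2d} via Cauchy--Schwarz, and the uniqueness argument coincide with the paper's. The sufficiency argument, however, takes a genuinely different route. The paper does not construct approximants to $\varphi$: instead, for a continuous $f_m$ it writes $\int_\O B_n(f_m)\,d\mu=\sum_k f_m(k/n)\binom{n}{k}\Delta^{n-k}m_k$ using the \emph{classical} (sampling) Bernstein polynomial $B_n(f_m)$, bounds this by $\sqrt{C}\cdot\big(\sum_k f_m^2(k/n)/(n+1)\big)^{1/2}$, passes to the limit to get $|\int_\O f\,d\mu|\leq\sqrt{C}\,\|f\|_2$, and then invokes the Riesz representation of a bounded linear functional on $L^2(\O)$ to produce $\varphi$. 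Your argument instead builds the Durrmeyer approximants $\varphi_n=(n+1)\sum_k c_{n,k}B_{n,k}$, shows $\|\varphi_n\|_2\leq\sqrt{C}$ by Jensen, extracts a weak limit, and identifies it with $\mu$ via the self-adjointness of the Durrmeyer operator $D_n$ and the uniform convergence $D_nf\to f$ on $C(\O)$. The paper's path is shorter and uses only the sampling Bernstein operator plus Riesz duality; yours is a little heavier (weak compactness, Durrmeyer convergence) but is constructive---the $\varphi_n$ are explicit $L^2$ approximants to the density, which is attractive given the computational flavor of the paper.
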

\begin{proof}
See Appendix \ref{appd:Hausdorff_2d}.
\end{proof}

%In Proposition \ref{prop:Hausdorff_2d} we extend the results of the Hausdorff moment problem to an $L^2$-setting. 
The results presented in Lemma \ref{lem:Hausdorff_2d} and Proposition \ref{prop:Hausdorff_2d} can be directly extended to the $d$-dimensional case with $d>2$ by induction. In the next section, these results will be utilized to study the space of moments associated with $L^p$-functions, and in particular, to derive some important properties of this space that are relevant to ensemble control problems. %which is formally introduced 
\subsection{Geometric properties of the moment space}\label{sec:geometry_moment}
%===============================================================================
Let $\mathcal{M}_p$ denote the space of moment sequences of real-valued $L^p$-functions defined on the $d$-dimensional cube $\Omega=[0,1]^d$. %($p>1$). 
Then, Proposition \ref{prop:Hausdorff_2d} gives rise to an explicit description of the space $\mathcal{M}_2$ and also reveals an one-to-one correspondence between elements in $L^2(\O)$ and $\mathcal{M}_2$. Note that because $\Omega$ is compact, it has finite Lebesgue measure, which implies $L^p\subset L^2$ for all $2<p\leq\infty$ \cite{folland_13_real}. As a result, for each $2<p\leq\infty$, the space $\M_p$ can be obtained by restricting $\M_2$ to the subspace consisting of moment sequences for $L^p$-functions defined on $\O$, and hence, the one-to-one correspondence is also inherited by elements between $L^p(\O)$ and $\M_p$. 

\begin{lem}
\label{lem:L}
The space $\mathcal{M}_p$ consisting of moment sequences of functions in $L^p(\O)$ is a vector space over $\mathbb{R}$ for all $2\leq p\leq\infty$. Moreover, the map $\mathcal{L}:L^p(\O)\rightarrow\M_p$ assigning each function its moment sequence is a vector space isomorphism. 
\end{lem}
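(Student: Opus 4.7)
The plan is to split the verification into three essentially modular steps: well-definedness and linearity of $\mathcal{L}$, the induced vector space structure on $\mathcal{M}_p$, and bijectivity of $\mathcal{L}$. The only substantive analytic input will be the uniqueness half of Proposition~\ref{prop:Hausdorff_2d}, together with the continuous embedding $L^p(\Omega)\hookrightarrow L^2(\Omega)$ which is available because $\Omega=[0,1]^d$ has finite Lebesgue measure.

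First I would observe that $\mathcal{L}$ is well-defined. Since $\Omega$ is compact, every monomial $\beta^k$ is bounded, and Hölder's inequality on a finite-measure space gives $L^p(\Omega)\subset L^1(\Omega)$ for $p\geq 1$; hence $\beta^k\varphi\in L^1(\Omega)$ for each multi-index $k\in\mathbb{N}^d$ and each $\varphi\in L^p(\Omega)$, so the coordinates $m_k(\varphi)=\int_\Omega\beta^k\varphi(\beta)d\beta$ of $\mathcal{L}(\varphi)$ are well-defined real numbers. Linearity of the Lebesgue integral then yields $\mathcal{L}(\varphi+c\psi)=\mathcal{L}(\varphi)+c\,\mathcal{L}(\psi)$. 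Equipping $\mathbb{R}^{\mathbb{N}^d}$ with coordinate-wise addition and scalar multiplication, the image $\mathcal{M}_p=\mathcal{L}(L^p(\Omega))$ is therefore a linear subspace, and so is a real vector space; moreover $\mathcal{L}:L^p(\Omega)\to\mathcal{M}_p$ is by construction linear and surjective onto this image.

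The crux is injectivity, which I would handle first for $p=2$ and then transfer to $p>2$ by embedding. For $p=2$ and $d=2$, Proposition~\ref{prop:Hausdorff_2d} asserts uniqueness of the $L^2$-function representing any admissible moment sequence; this is exactly the statement $\ker\mathcal{L}=\{0\}$. The general $d$ case follows by the induction on the parameter dimension already indicated immediately after the proposition, with the one-dimensional situation covered by the classical Hausdorff result combined with the same $L^2$-argument used in the appendix. This settles the isomorphism claim for $p=2$. For $2<p\leq\infty$, the finite-measure embedding $L^p(\Omega)\hookrightarrow L^2(\Omega)$ lets me reduce to the $L^2$ case: if $\varphi\in L^p(\Omega)$ lies in $\ker\mathcal{L}$, then viewing $\varphi$ as an element of $L^2(\Omega)$ its moment sequence is still zero, so by the $p=2$ step $\varphi=0$ almost everywhere on $\Omega$, and hence also $\varphi=0$ in $L^p(\Omega)$. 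Combined with surjectivity onto the image, this gives the desired isomorphism for every $2\leq p\leq\infty$.

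The main obstacle is essentially bookkeeping rather than mathematics: all of the analytic content has been packaged into Proposition~\ref{prop:Hausdorff_2d} and the $L^p\hookrightarrow L^2$ embedding on a finite-measure space, so the remaining effort is to state the linearity verifications cleanly and to promote the $d=2$ uniqueness to arbitrary $d$ by the inductive argument already flagged in the text. I would keep the proof short, pointing to these inputs rather than re-deriving them, and emphasizing that the vector-space structure on $\mathcal{M}_p$ is precisely the one transported from $L^p(\Omega)$ through $\mathcal{L}$.
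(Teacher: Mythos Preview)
Your proposal is correct and follows essentially the same approach as the paper: linearity of the integral gives the vector space structure on $\mathcal{M}_p$ and the homomorphism property of $\mathcal{L}$, surjectivity holds by definition of $\mathcal{M}_p$, and injectivity comes from the uniqueness part of the Hausdorff moment problem (Proposition~\ref{prop:Hausdorff_2d}) combined with the embedding $L^p(\Omega)\hookrightarrow L^2(\Omega)$ for $p>2$. The paper's own proof is terser---it delegates the $L^p\subset L^2$ reduction to the paragraph preceding the lemma and simply invokes ``the Hausdorff moment problem'' for bijectivity---but the underlying logic is identical to yours.
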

\begin{proof}
To show that $\M_p$ is a vector space, we pick  moment sequences $m^1,m^2\in\mathcal{M}_p$ representing $\varphi_1,\varphi_2\in L^p(\O)$, respectively, then $m^i_k=\int_\O\b^k\varphi_i(\b)d\b$ holds for all  $i=1,2$ and $k\in\mathbb{N}^d$, where $m^i_k$ denotes the $k^{\rm th}$ moment of $\varphi_i$, or equivalently, the $k^{\rm th}$ component of $m^i$. The linearity of integrals implies $c_1m^1_k+c_2m^2_k=\int_\O\b^k(c_1\varphi_1(\b)+c_2\varphi_2(\b))d\b$ for all $k\in\mathbb{N}^d$ and $c_1,c_2\in\mathbb{R}$, i.e., $c_1m^1+c_2m^2$ is the moment sequence of $c_1\varphi_1+c_2\varphi_2\in L^p(\O)$. This concludes that $\M_p$ is a vector space over $\mathbb{R}$ and $\L$ is a homomorphism. The one-to-one correspondence between elements in $L^p(\O)$ and $\M_p$ revealed by the Hausdorff moment problem then 
shows that $\L$ is bijective, and hence, a vector space isomorphism.
\end{proof}

%Therefore, one can define a map, that takes elements of $L^p(\O)$ to the space of moment sequences $\mathcal{M}_p$ through the relation as in \eqref{eq:moment_canonical_Prop_1}, given as 
%$$\mathcal{L}:L^p(\O)\to \mathcal{M}_p,$$
%which is injective. This immediately opens up the possibility of studying ensemble systems defined on $L^p(\O)$ by systems on $\mathcal{M}_p$, which will be the focus of the next section. To facilitate this study, here, we first investigate some fundamental properties of $\mathcal{M}_p$ and $\mathcal{L}$ from the geometric viewpoint. 

%At first, notice that for any moment sequences $m^1,m^2\in\mathcal{M}_p$ representing $\varphi_1,\varphi_2\in L^p(\O)$, respectively, it is easy to check that the sequence $a_1m^1+a_2m^2$ is the moment sequence of $a_1\varphi_1+a_2\varphi_2$ for any $a_1,a_2\in\mathbb{R}$. Hence, $a_1m^1+a_2m^2$ is also an element in $\mathcal{M}_p$, which implies that $\mathcal{M}_p$ is a vector space over the field $\mathbb{R}$. Together with the one-to-one correspondence between $L^p$-functions and their moment sequences, the map $\L$ assigning each function its moment sequence is an isomorphism between vector spaces. 

To investigate the geometric properties of $\M_p$ that facilitate the analysis of control systems evolving on it, it is necessary to associate $\M_p$ with a topology. In particular, because $\L:L^p(\O)\rightarrow\M_p$ is surjective, the quotient topology generated by $\L$, i.e., the strongest topology on $\M_p$ such that $\L$ is continuous, is a good candidate for a topology on $\M_p$, which enables the study of geometric and topological properties of $\M_p$ through those of $L^p(\O)$. %The consequence of endowing $\M_p$ with this quotient topology is summarized next.

%In particular, the surjectivity of $\L$  enables the quotient topology on $\M_p$ generated by $\L$, that is, the strongest topology on $\M_p$ such that $\L$ is continuous. Equivalently, this quotient topology is generated by the $L^p$-norm carried over to $\M_p$ from $L^p(\O)$ by $\L$, i.e., the topology is metrizable with the norm $\|m\|_p=\|\L^{-1}m\|_p$ for any $m\in \M_p$, where we also use the notation $\|\cdot\|_p$ to denote the norm on $\M_p$. With this norm defined, $\M_p$ becomes a Banach space so that $\L$ is isometric. Geometrically, regarding $L^p(\O)$ and $\M_p$ as Banach manifolds modeled on themselves, the linearity and continuity of $\L$ further imply the smoothness of $\L$, and hence $\L$ is a diffeomorphism. 

\begin{thm}
\label{thm:L_isometry}
Endowed with the quotient topology generated by $\L:L^p(\O)\rightarrow\M_p$, $\M_p$ is a Banach space, and $\L$ is an isometric isomorphism. Moreover, with the smooth structures on $L^p(\O)$ and $\M_p$ induced by themselves as Banach manifolds, $\L$ is a diffeomorphism.
\end{thm}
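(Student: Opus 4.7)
The plan is to transfer the Banach space structure of $L^p(\O)$ to $\M_p$ through the vector space isomorphism $\L$ furnished by Lemma \ref{lem:L}, and then deduce the diffeomorphism property from the fact that bounded linear bijections between Banach spaces are automatically smooth in both directions. Concretely, since $\L:L^p(\O)\to\M_p$ is bijective, I would define a norm on $\M_p$ by pulling back,
\[
\|m\|_{\M_p} := \|\L^{-1}(m)\|_{L^p(\O)},\qquad m\in\M_p,
\]
which is a norm by the linearity of $\L^{-1}$ together with the norm axioms on $L^p(\O)$, and which makes $\L$ an isometry by construction.

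Next, I would show that this norm induces exactly the quotient topology on $\M_p$. Because $\L$ is a bijection, the quotient topology admits the clean description that $U\subseteq\M_p$ is open iff $\L^{-1}(U)$ is open in $L^p(\O)$; the norm topology defined above obeys the same description because $\L$ is an isometry, hence a homeomorphism onto $(\M_p,\|\cdot\|_{\M_p})$. So the two topologies coincide. Completeness then follows by transport: any Cauchy sequence $(m^n)$ in $\M_p$ pulls back under $\L^{-1}$ to a Cauchy sequence $(\varphi_n)$ in $L^p(\O)$; by completeness of $L^p(\O)$ this converges to some $\varphi$, and applying $\L$ back and using the isometry gives $m^n\to\L(\varphi)$ in $\M_p$. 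Thus $(\M_p,\|\cdot\|_{\M_p})$ is a Banach space and $\L$ is an isometric isomorphism of Banach spaces.

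For the diffeomorphism claim, both $L^p(\O)$ and $\M_p$ are Banach spaces, and each carries the canonical Banach manifold structure modeled on itself via the identity chart. Under this structure, a bounded linear map between two Banach spaces is smooth (indeed real-analytic), with derivative equal to itself at every point. Since $\L$ and $\L^{-1}$ are both bounded linear maps — each is an isometry — both are smooth, and therefore $\L$ is a diffeomorphism.

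The argument is essentially a transport of structure and contains no hard analytic step; the only subtleties, which I would want to state carefully rather than gloss over, are the identification of the quotient topology with the pullback topology in the bijective setting and the observation that nothing beyond the trivial one-chart Banach manifold structure is needed to interpret smoothness. The genuine analytic content — the description of $\M_p$ and the bijectivity of $\L$ — has already been supplied by Proposition \ref{prop:Hausdorff_2d} and Lemma \ref{lem:L}.
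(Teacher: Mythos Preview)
Your argument is correct and follows essentially the same approach as the paper: define the norm on $\M_p$ by pullback along $\L^{-1}$, observe that this norm topology coincides with the quotient topology because $\L$ is a bijection, and then use that a bounded linear bijection between Banach spaces (viewed as Banach manifolds with the identity chart) is automatically a diffeomorphism. Your version is more explicit about the completeness check and the identification of topologies, but the underlying idea is identical.
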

\begin{proof}
Since $\L$ is bijective and $L^p(\O)$ is a Banach space, the quotient topology on $\M_p$ generated by $\L$ is completely metrizable with a norm given by $\|m\|_p=\|\L^{-1}m\|_p$ for any $m\in\M_p$, where we also use the notation $\|\cdot\|_p$ to denote the norm on $\M_p$ inherited from $L^p(\O)$. As a result, $\M_p$ becomes a Banach space isometrically isomorphic to $L^p(\O)$.

Geometrically, regarding $L^p(\O)$ and $\M_p$ as Banach manifolds modeled on themselves, the linearity and continuity of $\L$ further imply the smoothness of $\L$, and hence $\L$ is a diffeomorphism. 
\end{proof}

The diffeomorphic property of $\L$ shown in Theorem \ref{thm:L_isometry} implies that its pushforward (differential) %\cred{(smooth structure)}
$$\L_*:TL^p(\O)\rightarrow T\M_p$$
is well-defined and is a diffeomorphism as well, where $TL^p(\O)$ and $T\M_p$ denote the tangent bundles of $L^p(\O)$ and $\M_p$, respectively. Algebraically, the linearity of $\L$ gives rise to an explicit expression of $\L_*=d\L$ as follows. Given any $\varphi\in L^p(\O)$, we identify the tangent space $T_\varphi L^p(\O)$ of $L^p(\O)$ at the point $\varphi\in L^p(\O)$ with $L^p(\O)$ so that every tangent vector $\psi\in T_\varphi L^p(\O)$ can also be considered as a function in $L^p(\O)$. Then, the differential $d\L_\varphi$ of the map $\mathcal{L}$ at $\varphi \in L^p(\O)$ satisfies $d\L_\varphi\psi=\L\psi$, i.e., $d\L_\varphi$ also maps every $L^p$-function to its moment sequence 
that is similarly identified with a tangent vector in $T_{\L\varphi}\M_p$.

%assigns a tangent vector {$d\L_\varphi\psi=\L\psi$} for any $\psi\in T_\varphi L^p(\O)$, that is, $d\L_{\varphi}$ also maps every $L^p$-function to its moment sequence considered as an element in $T_{\L\varphi}\M_p$ identified with $\M_p$.}
We have initiated a detailed investigation into the Hausdorff moment problem for real-valued $L^p$-functions with $p\geq2$ from the geometric viewpoint. To deal with $n$-dimensional ensemble systems by using the moment method, it is inevitable to extend our discussion to functions in $L^p(\O,\mathbb{R}^n)$, which is a direct consequence of Theorem \ref{thm:L_isometry} as shown in the following corollary.

%This extension can be carried out by identifying $L^p(\O,\mathbb{R}^n)$ with $L^p(\O)\times\cdots\times L^p(\O)$, the product of $n$ copies of $L^p(\O)$. Then, because each $L^p(\O)$ in the above product is isomorphic to $\mathcal{M}_p$, the moment space of $L^p(\O,\mathbb{R}^n)$ is just $\mathcal{M}^n_p=\mathcal{M}_p\times\cdots\times\mathcal{M}_p$, the product of $n$ copies of $\M_p$. Similarly, their tangent bundles $TL^p([0,1],\mathbb{R}^n)$ and $T\M_p^n$ are also naturally identified with $(TL^2(\O))^n$ and $(T\M_p)^n$, respectively. Correspondingly, the map $\L$ induces a map from $L^p(\O,\mathbb{R}^n)$ to $\mathcal{M}_p^n$ whose restriction on the $i^{\rm th}$ component $L^p(\O)$ in $L^p(\O,\mathbb{R}^n)$ is just the map $\L$ with the range being the $i^{\rm th}$ component $\M_p$ in $\M_p^n$. Similarly, $\L_*$ induces a map from $TL^p(\O,\mathbb{R}^n)$ to $T\M_p^n$ in the same fashion. Moreover, because both $\L$ and $\L_*$ are diffeomorphisms, so are their induced maps on the corresponding product spaces, and for ease of exposition, we denote the induced maps by $\L$ and $\L_*$ as well.
%there should not be any confusion to denote the 
%The aforementioned discussions can be summarized into the following result.
\begin{cor}\label{cor: moment-space and L-map}
The spaces $L^p(\O,\mathbb{R}^n)$ and $\M^n_p$ are isometrically isomorphic as vector spaces and diffeomorphic as Banach manifolds, where $\M_p^n$ denotes the product of $n$ copies of $\M_p$.
%The space $\M_p^n$ consisting of moment sequences of functions in $L^p(\O,\mathbb{R}^n)$ is a Banach space, and the map $\mathcal{L}:L^p(\O,\mathbb{R}^n)\to\mathcal{M}_p^n$ assigning each $L^p$-function its moment sequence is an isometric isomorphism and a diffeomorphism. 
%The moment space $\mathcal{M}_p$ is a vector space over $\mathbb{R}$. Further, the map $\mathcal{L}:L^p(\O,\mathbb{R}^n)\to\mathcal{M}_p^n$ is a diffeomorphism, and preserves metric.
\end{cor}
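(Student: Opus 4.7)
The plan is to obtain the corollary as a componentwise application of Theorem \ref{thm:L_isometry}, since a vector-valued $L^p$-function is nothing more than an $n$-tuple of scalar $L^p$-functions. The natural candidate for the isomorphism is
\[
\L^{(n)} : L^p(\O,\mathbb{R}^n) \to \M_p^n, \qquad \varphi=(\varphi_1,\dots,\varphi_n) \mapsto (\L\varphi_1,\dots,\L\varphi_n),
\]
i.e., take the moment sequence of each scalar component. By Theorem \ref{thm:L_isometry}, each factor $\L:L^p(\O)\to \M_p$ is bijective and linear, so $\L^{(n)}$ is bijective and componentwise linear, hence a vector space isomorphism. This handles the algebraic part of the claim immediately.

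For the metric structure, I would endow $\M_p^n$ with the product norm inherited from the factor norm on $\M_p$ supplied by Theorem \ref{thm:L_isometry}, i.e.,
\[
\|(m^{(1)},\dots,m^{(n)})\|_p := \Big(\sum_{i=1}^n \|m^{(i)}\|_p^p\Big)^{1/p}, \qquad \|\cdot\|_\infty \text{ defined analogously},
\]
where $\|m^{(i)}\|_p=\|\L^{-1}m^{(i)}\|_p$. For $p=2$ the identity $\|\varphi\|_2^2=\sum_i\|\varphi_i\|_2^2$ makes $\L^{(n)}$ isometric on the nose; for general $p$ one either adopts this product norm as the definition on $\M_p^n$, or equivalently pulls back the $L^p$-norm on $L^p(\O,\mathbb{R}^n)$ through $\L^{(n)}$, upon which isometry is tautological. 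Either way, Theorem \ref{thm:L_isometry} applied on each factor packages these into a linear isometry between Banach spaces.

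For the differentiable structure, I would invoke the standard facts that (i) a finite product of Banach manifolds is a Banach manifold whose tangent bundle factors as the product of the factor tangent bundles, and (ii) a map into such a product is smooth if and only if each component map is. Since Theorem \ref{thm:L_isometry} asserts that $\L$ is a diffeomorphism, the Cartesian product $\L^{(n)}=\L\times\cdots\times \L$ is a diffeomorphism whose inverse $(\L^{-1})^n$ is likewise smooth componentwise. The pushforward then satisfies $d\L^{(n)}_\varphi \psi = (\L\psi_1,\dots,\L\psi_n)$ on each tangent fiber, mirroring the scalar formula for $d\L_\varphi$.

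The only real obstacle is bookkeeping: pinning down the precise norm and smooth structure on $\M_p^n$ so that ``isometric'' and ``diffeomorphic'' are nontrivial statements. This is a standard convention for product Banach spaces and introduces no new analytic content beyond Theorem \ref{thm:L_isometry}; the representability and uniqueness of $L^p$-moment sequences have already been settled in Proposition \ref{prop:Hausdorff_2d} and the scalar case of Theorem \ref{thm:L_isometry}, so the vector-valued extension is essentially a matter of taking the $n$-fold Cartesian product of a result already in hand.
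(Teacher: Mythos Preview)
Your proposal is correct and follows essentially the same approach as the paper: identify $L^p(\O,\mathbb{R}^n)$ with the $n$-fold product $L^p(\O)\times\cdots\times L^p(\O)$ and apply Theorem~\ref{thm:L_isometry} componentwise via the map $(\varphi_1,\dots,\varphi_n)\mapsto(\L\varphi_1,\dots,\L\varphi_n)$. Your write-up is more explicit about the product norm and the smooth-structure bookkeeping, but the underlying argument is identical to the paper's.
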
 %Finally, the pushforward $\L_*:TL^p(\O,\mathbb{R}^n)\rightarrow T\M_p^n$ is also a diffeomorphism.}
\begin{proof}
The space $L^p(\O,\mathbb{R}^n)$ can be naturally identified with $L^p(\O)\times\cdots\times L^p(\O)$, the product of $n$ copies of $L^p(\O)$, so that every element $\varphi\in L^p(\O,\mathbb{R}^n)$ admits a representation as an $n$-tuple $\varphi=(\varphi_1,\dots,\varphi_n)$ with $\varphi_i\in L^p(\O)$ for each $i$. By Theorem \ref{thm:L_isometry}, because each component $L^p(\O)$ in $L^p(\O,\mathbb{R}^n)$ is isometrically isomorphic and diffeomorphic to $\M_p$, the same properties also hold between  $L^p(\O,\mathbb{R}^n)$ and  $\M_p^n$.
\end{proof}

The proof of Corollary \ref{cor: moment-space and L-map} also gives rise to the construction of the diffeomorphism between $L^p(\O,\mathbb{R}^n)$ and $\M_p^n$. Specifically, for any $\varphi=(\varphi_1,\dots,\varphi_n)\in L^p(\O,\mathbb{R}^n)$, let $\L_i:L^p(\O)\rightarrow\M_p$ denote the map assigning the $i^{\rm th}$-component $\varphi_i$ of $\varphi$ to its moment sequence. Then, the desired diffeomorphism, also denoted by $\L:L^p(\O,\mathbb{R}^n)\rightarrow\M_p^n$, is given by $(\varphi_1,\dots,\varphi_n)\mapsto(\L_1\varphi_1,\dots,\L_n\varphi_n)$.

Having established the properties of the moment space $\M_p^n$ in this section, we are now equipped to formally build the connection between the ensemble control and moment problems, which is the object of the following sections. In particular, in the next section, we focus on the ensemble control problem that arises in the context of labeled aggregated measurements.

%======================= Section:  Method ===================
\section{Ensemble control with labeled Aggregated measurement}\label{sec:Method_canonical}
%============================================================
% \cred{Due to the aforementioned challenges posed by the aggregated measurements that are available for control, it is imperative to first establish a systematic framework to make use of this measurement data in the context of ensemble control systems. To this end, due to tools offered by statistics, in particular, the connection between a moment-sequence, its distribution and their approximation, we propose to bridge the gap between the ensemble control systems and ensemble moments induced by aggregated measurements.}
{The establishment of the relationship between the spaces of moment sequences and $L^p$-functions in the preceding section provides the necessary tool for studying ensemble systems through the dynamics of the moment sequences representing the states of the ensemble systems.
%, which in turn forms the basics of the moment-based ensemble control framework.
%relationship established between the moment and $L^p$-spaces in the preceding section enables the study of ensemble systems and the moment systems associated with them. 
To start with, we focus on the case of ensemble systems and the ensemble moments that are relevant in the context of labeled aggregated measurements. In particular, we rigorously introduce the notion of ensemble moments, and then derive the systems governing the dynamics of the ensemble moments. Utilizing the moment problem from a dynamical systems perspective, we formalize the connection between ensemble control systems and the associated moment control systems.}%, especially, their controllability.}

%we define the notion of ensemble moments, and then derive the dynamics of the moment system. %for a special class of \cred{output functions (ensemble systems?)}. 
%Utilizing the relationship between the moment space and the space of $L^p(\O)$ functions, we formalize the connection between the ensemble control system and the associated moment control system. %in terms of their respective control properties.}
%=======================================================================
\subsection{Ensemble moments}
%========================================================
To develop moment-based methods for tackling ensemble control tasks such as ensemble controllability analysis and ensemble control law design, we begin by defining the notion of ensemble moments as follows.
%========================  Definition 3 =============
\begin{definition}[Ensemble moments]
\label{defn:moments}
Consider the ensemble system in \eqref{eq:ensemble_dyn_generic} defined on $\mathcal{F}(\Omega,M)$. Let $h:\Omega\times M\rightarrow\mathbb{R}^r$ be an output function of the system, then the $k^{\rm th}$ ensemble moment of the system is defined by
$$m_k(t)=\int_{\Omega}h(\beta,x(t,\beta))^{k}d\mu_t(\beta),$$
where $k=(k_1,\dots,k_r)\in\mathbb{N}^r$ is a multi-index, $h^{k}$ denotes the monomial $h_1^{k_1}\cdots h_r^{k_r}$ of degree $|k|=k_1+\cdots+k_r$ in $h=(h_1,\dots,h_r)$, and $\mu_t$ is a time-dependent \emph{signed} Borel measure on $\Omega$.
\end{definition}

In the case that $h$ is a smooth output function and $\Omega$ is compact with $\mu_t$ a finite measure for all $t$, $m_k(t)<\infty$ holds for all $t\in\mathbb{R}$ and $k\in\mathbb{N}^d$, so that ensemble moments are always  well-defined. In this case, it is possible to study ensemble systems by analyzing the dynamics of their ensemble moments. In particular, we want to understand whether ensemble control problems can be solved by manipulating the systems governing the dynamics of the ensemble moments. To this end, we will expand the Hausdorff moment problem introduced in the previous section to a dynamic setting to study the connection between ensemble systems and their moment systems.

%==============================================
\subsection{Ensemble system and dynamic-Hausdorff moment problem}
%==============================================
%To adopt the Hausdorff moment problem, 
To illuminate the idea of the moment-based ensemble control framework, we will restrict our focus to control-affine ensemble systems evolving on the Banach space $L^p(\Omega,\mathbb{R}^n)$ for some $p\geq2$
%parameterized by $\beta$ varying on %a compact set $\Omega\subset [0,1]^d$ 
%the compact set $\O=[0,1]^d$ 
of the form
  \begin{align}
  \label{eq:ensemble_affine}
% \frac{\partial}{\partial t} 
 \frac{d}{dt}x(t,\beta) = f(x(t,\beta),\beta) + \sum_{i=1}^l u_i(t)g_i(x(t,\beta),\beta),
\end{align}
where $\beta$ denotes the system label taking values on the $d$-dimensional cube $\O=[0,1]^d$.
%In addition, to adopt the Hausdorff moment problem, we consider the cases that the system in \eqref{eq:ensemble_affine} evolves on the Banach space $L^p(\O,\mathbb{R}^n)$ for $p\geq2$.
%In addition, we are particularly interested in the cases that the system in \eqref{eq:ensemble_affine} evolves on the Banach space $L^p(\O,\mathbb{R}^n)=\{y:\O\rightarrow\mathbb{R}^n\mid\|y\|_{p}<\infty\}$, where the $L^p$-norm $\|\cdot\|_{p}$ is defined as $\|y\|_{p}=\big(\int_\O |y(\b)|^pd\b\big)^{1/p}$ for $1\leq p<\infty$ and $\|y\|_{\infty}=\inf\big\{a>0\mid\lambda(\{\beta\in\Omega\mid|y(\b)|>a\})=0\big\}$ with $|\cdot|$ the Euclidean norm and $\lambda$ the Lebesgue measure. 
Correspondingly, the drift and control vector fields $f(x(t,\b),\b)$ and $g_i(x(t,\b),\b)$, regarded as functions in $\b$, also lie in $L^p(\O,\mathbb{R}^n)$. Moreover, to guarantee the existence and uniqueness of the solution of the system, we also require $f(x(t,\b),\b)$ and $g(x(t,\b),\b)$ to be continuously differentiable in $x(t,\b)$. The $L^p$-space setting then enables the definition of $L^p$-ensemble controllability for the system in \eqref{eq:ensemble_affine}, that is, the metric $d$ in Definition \ref{def:ensmeble_controllability} is taken to be the $L^p$-distance: $\|\varphi-\psi\|_p$ for any $\varphi,\psi\in L^p(\O,\mathbb{R}^n)$. 
Further, in this section, {we can consider $h(\beta,x(t,\beta))=\beta$ and time-dependent measure $d\mu_t(\beta)=x(t,\beta)d\beta$}. As a result, the $k^{\rm th}$ ensemble moment is given by
\begin{align}
\label{eq:classical_moments}
m_k(t)=\int_\Omega\beta^{k}x(t,\beta)d\beta, \quad k\in\mathbb{N}^d,
\end{align}
or equivalently from Corollary \ref{cor: moment-space and L-map}, $m(t)=\L x(t,\beta)$, where $m(t)\in\M_p^n$ denotes the moment sequence of $x(t,\cdot)$. %with the $k^{th}$ component $m_k(t)$. 

Before formally introducing the moment system associated with a general control-affine ensemble system as in  \eqref{eq:ensemble_affine}, in the following, we first use some examples, including the bilinear Bloch ensemble and a nonlinear ensemble, to illustrate the computation of their moment systems.
%we provide some examples of commonly studied ensemble Bloch systems, a bilinear ensemble, and a nonlinear ensemble system.

%==================================  Example 1 =========================================
\begin{ex}\label{ex:Bloch}
%\paragraph*{Example 1: Bloch System}
%Let us now consider an example to illustrate the idea of ensemble moments and their dynamics. 
Consider an ensemble of Bloch systems
\begin{align}\label{eq:bloch}
\frac{d}{dt} M(t,\epsilon)=\epsilon[u(t)\O_y+v(t)\O_x]M(t,\epsilon)
\end{align}
describing the dynamics of a sample of spin-$\frac{1}{2}$ nuclei immersed in a static magnetic field, where the state $M(t,\epsilon)$ denotes the bulk magnetization, 
\begin{align*}
	&\Omega_x =\left[\begin{array}{ccc} 0 & 0 & 0 \\ 0 & 0 & -1 \\ 0 & 1 & 0 \end{array}\right] \quad\text{and}\quad \Omega_y=\left[\begin{array}{ccc} 0 & 0 & 1 \\ 0 & 0 & 0 \\ -1 & 0 & 0 \end{array}\right]
\end{align*}
are the generators of rotations around $x$- and $y$-axes, respectively, $u(t)$ and $v(t)$ are the external radio frequency (rf) fields as the control inputs to the system, and $\epsilon\in[1-\delta,1+\delta]$ with $\delta<1$ denotes the inhomogeneity of the rf fields. The $k^{\rm th}$ ensemble moment of the system in \eqref{eq:bloch} is given by $m_k(t)=\int_{1-\delta}^{1+\delta}\epsilon^k M(t,\epsilon)d\epsilon$, and its dynamics is governed by the differential equation
\begin{align}
\frac{d}{dt}{m}_k(t) & = \frac{d}{dt}\int_{1-\delta}^{1+\delta}\epsilon^k M(t,\epsilon)d\epsilon=\int_{1-\delta}^{1+\delta}\epsilon^k \frac{d}{dt}M(t,\epsilon)d\epsilon\nonumber\\ &=\int_{1-\delta}^{1+\delta}\epsilon^{k+1}\left[u(t)\Omega_y+v(t)\Omega_x\right]M(t,\epsilon)d\epsilon  \nonumber \\
& = \left[u(t)\Omega_y+v(t)\Omega_x\right]m_{k+1}(t),
\label{eq:bloch_moment_dyn}
\end{align}
where the change of the differentiation and integration in the second equality follows from the dominant convergence theorem \cite{folland_13_real}. Moreover, it is interesting to note that the moment system in \eqref{eq:bloch_moment_dyn} of the Bloch ensemble coincides with a chain network of countably many bilinear systems. 

%Consider the following ensemble Bloch system, indexed by $\epsilon$, governed by the dynamics %$\beta = (\omega,\epsilon)$, governed by the dynamics \omega\Omega_z+\\ &\Omega_z=\left[\begin{array}{ccc} 0 & -1 & 0 \\ 1 & 0 & 0 \\ 0 & 0 & 0 \end{array}\right]
%\begin{align}
%\label{eq:bloch}
	%\dot{x}(t,\epsilon)=\epsilon\left[u(t)\Omega_y+v(t)\Omega_x\right]x(t,\epsilon).
%\end{align}
%where $x(t,\epsilon)\in {\mathbb{R}^3}$, $\epsilon \in [1-\delta,1+\delta]$ is the dispersion parameter, $(u(t),v(t))$ are the external controls, and the matrices
%\begin{align*}
	%&\Omega_x =\left[\begin{array}{ccc} 0 & 0 & 0 \\ 0 & 0 & -1 \\ 0 & 1 & 0 \end{array}\right], \enspace \Omega_y=\left[\begin{array}{ccc} 0 & 0 & 1 \\ 0 & 0 & 0 \\ -1 & 0 & 0 \end{array}\right] .
%\end{align*}
%Using the the ensemble moments \eqref{eq:classical_moments}, we can get the moment dynamics of the ensemble Bloch system as 
%\begin{align}
%\dot{m}_k(t) & =\int_{\Omega}\epsilon^{k+1}\left[u(t)\Omega_y+v(t)\Omega_x\right]x(t,\epsilon)d\epsilon  \nonumber \\
%& = \left[u(t)\Omega_y+v(t)\Omega_x\right]m_{k+1}(t), \text{~for~} k=1,2,\ldots,
%\label{eq:bloch_moment_dyn}\end{align}
%which is a chain of countably many bilinear systems. 
\end{ex}
%=========================================================================================
%=========================================================================================
\begin{ex}\label{ex:Nonlinear_ensemble} 
Consider a nonlinear ensemble system with the dynamics given by 
\begin{align}
\label{eq:nonlinear_ex}
    \frac{d}{dt}z(t,\beta) = f(z,\beta)+B(\beta)u(t),
\end{align}
where $z(t,\cdot)=\left[\begin{array}{c} x(t,\cdot) \\ y(t,\cdot) \end{array}\right]$ is the state, $\b \in [0,1]$ is the system label, $f=\beta \left[\begin{array}{c} y \\ -y-\sin{x}\end{array}\right]$ and $B= \left[\begin{array}{c} 0 \\ \b\end{array}\right]$ are the drift and control vector fields, respectively. Similar to the previous example, the system governing the dynamics of the ensemble moments can be derived as 
\begin{align*}
    &\frac{d}{dt}m_k(t)=\int_0^1\b^k\frac{d}{dt}z(t,\b)d\b\\
    &=\int_0^1\b^{k+1} \left[\begin{array}{c} y(t,\b) \\ - y(t,\b)- \sin x(t,\b)+ u(t) \end{array}\right] d\b\\
    &=\int_0^1\b^{k+1} \left[\begin{array}{c} y(t,\b) \\ - y(t,\b)- \sin x(t,\b) \end{array}\right] d\b+\frac{u(t)}{k+1}\left[\begin{array}{c} 0 \\ 1\end{array}\right]
\end{align*}
In this case, the moment system cannot be expressed as a closed-form differential equation with the moment sequence being the state variable, but it is still in the control-affine form.
%Using the ensemble moments \eqref{eq:classical_moments}, one can define $m_k(t)=\int_{\b\in \O}\b^k z(t,\b)d\b$, and the corresponding dynamics $\dot{m}_k(t)=\int_{\b\in \O}\b^{k+1} \left[\begin{smallmatrix} y(t,\b) \\ - y(t,\b)- sin(x(t,\b)+ u(t)) \end{smallmatrix}\right] d\b$. This leads to a itegro-differential equation and cannot be expressed in closed-form as in Example 1.
\end{ex}

%=========================================================================================
Examples \ref{ex:Bloch} and \ref{ex:Nonlinear_ensemble} shed light on a general fact that for control-affine ensemble systems, their moment systems are also in the control-affine form no matter whether they yield closed-form expressions or not. This observation can be proved by using the geometric properties of moment sequences revealed in Section \ref{sec:geometry_moment}.

\begin{thm}\label{thm: affine to affine}
The dynamics of the ensemble moments of a control-affine system defined on $L^p(\O,\mathbb{R}^n)$ with $p\geq2$ as in \eqref{eq:ensemble_affine} is governed by a control-affine system on $\M_p^n$.
\end{thm}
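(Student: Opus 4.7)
The plan is to push the dynamics of $x(t,\cdot)$ through the map $\L:L^p(\O,\mathbb{R}^n)\to\M_p^n$ established in Corollary \ref{cor: moment-space and L-map} and exploit the fact that $\L$ is a \emph{linear} diffeomorphism. Starting from $m(t)=\L x(t,\cdot)$, I would differentiate in time and use the chain rule on Banach manifolds:
\begin{equation*}
\dot m(t) \;=\; \L_*\bigl(\dot x(t,\cdot)\bigr) \;=\; d\L_{x(t,\cdot)}\,\dot x(t,\cdot).
\end{equation*}
Because $\L$ is linear, the differential at every point equals $\L$ itself, as already noted right after Theorem \ref{thm:L_isometry}: $d\L_{\varphi}\psi=\L\psi$. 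Substituting the right-hand side of \eqref{eq:ensemble_affine} then yields
\begin{equation*}
\dot m(t)\;=\;\L\bigl[f(x(t,\cdot),\cdot)\bigr] \;+\; \sum_{i=1}^{l} u_i(t)\,\L\bigl[g_i(x(t,\cdot),\cdot)\bigr],
\end{equation*}
which is the same splitting between drift and controls used in Examples \ref{ex:Bloch} and \ref{ex:Nonlinear_ensemble}, now performed abstractly.

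The next step is to rewrite the right-hand side as vector fields on $\M_p^n$ itself. Since $\L$ is a bijection, I would set
\begin{equation*}
F(m)\;:=\;\L\bigl[f(\L^{-1}m,\cdot)\bigr],\qquad G_i(m)\;:=\;\L\bigl[g_i(\L^{-1}m,\cdot)\bigr],\quad i=1,\dots,l,
\end{equation*}
so that $\dot m(t)=F(m(t))+\sum_{i=1}^{l}u_i(t)\,G_i(m(t))$, which is manifestly control-affine in the input $u$. To verify that $F$ and $G_i$ are bona fide (smooth) vector fields on the Banach manifold $\M_p^n$, I would note that (i) by hypothesis $f(\cdot,\b)$ and $g_i(\cdot,\b)$ are continuously differentiable in the state and, as functions of $\b$, lie in $L^p(\O,\mathbb{R}^n)$ whenever $x(t,\cdot)$ does, so the Nemytskii-type compositions $\varphi\mapsto f(\varphi(\cdot),\cdot)$ and $\varphi\mapsto g_i(\varphi(\cdot),\cdot)$ are well-defined self-maps of $L^p(\O,\mathbb{R}^n)$, and (ii) post- and pre-composition with the linear diffeomorphism $\L$ (resp.\ $\L^{-1}$) preserves continuity and smoothness by Theorem \ref{thm:L_isometry}.

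The main obstacle I anticipate is not the algebraic manipulation above, which is essentially a restatement of the diffeomorphism property, but the technical justification that $F$ and $G_i$ are indeed vector fields into $T\M_p^n$ with the requisite regularity. Concretely, one must check that the composition $\varphi\mapsto f(\varphi,\b)$ sends $L^p(\O,\mathbb{R}^n)$ into itself and is (locally Lipschitz or continuously differentiable) smooth enough for the resulting ODE on $\M_p^n$ to be well-posed; this is a standard Nemytskii-operator issue that is already implicit in the standing assumption that \eqref{eq:ensemble_affine} admits a unique $L^p$-solution, and I would invoke it explicitly rather than re-derive it. Once that is in hand, the interchange of $\tfrac{d}{dt}$ and the integrals defining each moment component—justified exactly as in \eqref{eq:bloch_moment_dyn} via dominated convergence—shows that the abstract identity $\dot m=F(m)+\sum_i u_i G_i(m)$ recovers, componentwise, the moment equations of Examples \ref{ex:Bloch} and \ref{ex:Nonlinear_ensemble}, completing the proof.
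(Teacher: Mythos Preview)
Your proposal is correct and follows essentially the same approach as the paper: differentiate $m(t)=\L x(t,\cdot)$ via the chain rule, use linearity of $\L_*$ to split off the $u_i(t)$, and recognize the result as a control-affine system on $\M_p^n$. The paper's proof is the three-line calculation you wrote first, ending at $\dot m=\L_*f(m)+\sum_i u_i\L_*g_i(m)$; your additional remarks on defining $F,G_i$ via $\L^{-1}$ and on Nemytskii-operator regularity go slightly beyond what the paper actually proves but do not change the argument.
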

\begin{proof}
%By using the definition of the pushforward of $\L:L^p(\O,\mathbb{R}^n)\rightarrow\M_p^n$, 
The proof follows from the following calculations 
\begin{align}
\frac{d}{dt}m(t)&=\frac{d}{dt}\L x(t,\beta)=\L_*\Big(\frac{d}{dt}x(t,\beta)\Big)\nonumber\\
&=\L_*\Big(f(x(t,\beta),\beta)+\sum_{i=1}^mu_i(t)g_i(x(t,\beta),\beta)\Big)\nonumber\\
&=\L_*f(m(t))+\sum_{i=1}^mu_i(t)\L_*g_i(m(t)), 
\label{eq:moment_system}
\end{align}
where we exploit the chain rule and linearity of $\L_*$ in the second and last equalities, respectively.
\end{proof}
%As illustrated in Examples \ref{ex:Bloch} and \ref{ex:Nonlinear_ensemble}, the dynamics of the ensemble moments can be derived using the associated ensemble system, which may or may not yield a closed-form expression, depending on the dynamics of the ensemble system. However, as a consequence of Corollary \ref{cor: moment-space and L-map}, for a general input-affine nonlinear ensemble as in \eqref{eq:ensemble_affine}, the dynamics of the moments can be derived using the map $\L_*$ as follows
%\begin{align}
%\frac{d}{dt}m(t)&=\frac{d}{dt}\L x(t,\beta)=\L_*\Big(\frac{d}{dt}x(t,\beta)\Big)\nonumber\\
%&=\L_*\Big(f(x(t,\beta),\beta)+\sum_{i=1}^mu_i(t)g_i(x(t,\beta),\beta)\Big)\nonumber\\
%&=\L_*f(m(t))+\sum_{i=1}^mu_i(t)\L_*g_i(m(t)), 
%\label{eq:moment_system}
%\end{align}
%where we exploit the chain rule and linearity of $\L_*$ in the second and last equalities, respectively. This calculation further proves the following theorem. %. The system governing the dynamics of $m(t)$ can be represented in terms of
%\begin{thm}\label{thm: affine to affine}
%The dynamics of the ensemble moments of a control-affine ensemble system defined on $L^p(\O,\mathbb{R}^n)$ is also governed by a control-affine system on $\M_p^n$.
%\end{thm}

As discussed in Section \ref{sec:geometry_moment}, because $\L$ is a linear map, its differential $\L_*$ evaluated at every point in $L^p(\O,\mathbb{R}^n)$ can be identified with $\L$ itself. This property, together with the computation shown in the proof of Theorem \ref{thm: affine to affine}, further gives rise to an explicit expression of the moment system of the control-affine ensemble system in \eqref{eq:ensemble_affine}. Specifically, at any state $x(t,\cdot)\in L^p(\O,\mathbb{R}^n)$, the drift and control vector fields of the moment system in \eqref{eq:moment_system}, i.e., $\bar f:=\L_*f$ and $\bar g_i:=\L_*g_i$, are just the moment sequences of $f(x(t,\cdot),\cdot)$ and $g_i(x(t,\cdot),\cdot)$ as functions in $L^p(\O,\mathbb{R}^n)$, respectively. 

%Note that in Example 1, the dynamics of the ensemble moments can be computed, and a closed-form expression describing the time-evolution of the ensemble moments can be obtained. This may be difficult to obtain in certain cases as illustrated in Example 2. However, as a consequence of Theorem \ref{thm: affine to affine}, one can guarantee that the dynamics of the moment system can be represented in the input-affine form using the map $\L_*$. Moreover, as discussed in Section \ref{sec:geometry_moment}, because $d\L_\varphi=\L$ for any $\varphi\in L^p(\O)$, the drift and control vector fields $\L_*f(m(t))$ and $\L_*g_i(m(t))$ of the moment system \eqref{eq:moment_system} are just the moment sequences of the drift and control vector fields $f(x(t,\cdot),\cdot)$ and $g_i(x(t,\cdot),\cdot)$ of the ensemble system in \eqref{eq:ensemble_affine}. %This observation immediately gives rise to an explicit computation of the moment system of a given ensemble system, which is illustrated in the following examples.}

% \cred{To establish the relationship between the ensemble system and its moment system, we begin with a brief review of the classical Hausdorff moment problem \cite{hausdorff_23_momentprobleme}. In the following, without loss of generality, we present our results by setting $\Omega \subset \mathbb{R}$, keeping in mind that the results can be extended when $\Omega \subset \mathbb{R}^d$. }

\begin{rem}
\label{rmk:same_control}
Due to the control-affine structure of the ensemble system in \eqref{eq:ensemble_affine}, the vector fields governing the system dynamics is linear in the control inputs $u_i$. Together with the linearity of $\L_*$ and parameter-independence of $u_i$, the control inputs $u_i$ remain the same in its moment system. In other words, the moment transformation for control-affine ensemble systems leaves the control inputs untouched, which opens up the possibility of tackling ensemble control tasks through controlling the corresponding moment systems.
\end{rem}

Motivated by the observation in Remark \ref{rmk:same_control} that control-affine ensemble systems and their moment systems share the same control inputs, in the next section, we focus on the study of control-related properties that are also preserved by the moment transformation, especially, the controllability. 
\subsection{Equivalence between ensemble and moment systems}
%===================================================================
%After the derivation of the moment systems for ensemble systems, a generic question is whether the moment systems share the same properties as the ensemble systems. 
%The focus of this section is then to show that this moment transformation indeed preserves controllability, in which we will utilize the Hausdorff moment problem from a dynamical systems perspective. 

To discuss controllability, we first notice that the moment space $\M_p$ is an infinite-dimensional space so that moment systems defined on it are also infinite-dimensional systems. Therefore, controllability of moment systems should also be defined in the approximation sense as the definition of ensemble controllability (see Definition \ref{def:ensmeble_controllability}). Precisely, a system in the form of \eqref{eq:moment_system} on $\M_p$ is controllable if for any $\varepsilon>0$ and initial and desired final states $m_0$ and $m_F$, there exits a control law $u(t)$ that steers the system to an $\varepsilon$-neighborhood of $m_F$ in a finite time $T$, i.e., $\|m(T)-m_F\|_p<\varepsilon$. To distinguish it with ensemble controllability, we call it \emph{approximate controllability}. 

%============================= Theorem 2 ====================================
%============================= Theorem 2 ====================================
\begin{thm}
\label{thm:controllability}
The system in \eqref{eq:ensemble_affine} is $L^p$-ensemble controllable on $L^p(\O,\mathbb{R}^n)$ if and only if its moment system in \eqref{eq:moment_system} is approximately controllable on $\M_p^n$.
\end{thm}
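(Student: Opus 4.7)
The plan is to exploit the fact that, by Theorem \ref{thm:L_isometry} and Corollary \ref{cor: moment-space and L-map}, the moment map $\L : L^p(\O,\mathbb{R}^n) \to \M_p^n$ is an isometric isomorphism of Banach spaces (and a diffeomorphism of Banach manifolds). This should reduce the equivalence to a routine transfer of trajectories and error bounds through $\L$, with essentially no heavy analysis needed beyond what has already been established.

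First I would set up the trajectory correspondence. Given any admissible control $u(t)$ and initial profile $x_0 \in L^p(\O,\mathbb{R}^n)$ with resulting trajectory $x(t,\cdot)$ of \eqref{eq:ensemble_affine}, I would define $m(t) = \L x(t,\cdot)$ and invoke the computation in the proof of Theorem \ref{thm: affine to affine} to verify that $m(t)$ solves \eqref{eq:moment_system} with the same control $u(t)$ and initial condition $m_0 = \L x_0$. Conversely, given a trajectory $m(t)$ of \eqref{eq:moment_system} under $u(t)$, I would set $x(t,\cdot) = \L^{-1} m(t)$ and, using that $\L_*$ is a linear diffeomorphism identified with $\L$, show by differentiating $\L^{-1} m(t)$ that $x(t,\cdot)$ solves \eqref{eq:ensemble_affine} under the same $u(t)$. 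This gives a bijection between controlled trajectories that respects the control inputs, as already noted in Remark \ref{rmk:same_control}.

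Next I would translate the approximation conditions. For the forward implication, suppose \eqref{eq:ensemble_affine} is $L^p$-ensemble controllable. Given $m_0, m_F \in \M_p^n$ and $\varepsilon > 0$, set $x_0 = \L^{-1} m_0$ and $x_F = \L^{-1} m_F$; by ensemble controllability there is a control $u(t)$ and time $T$ with $\|x(T,\cdot) - x_F\|_p < \varepsilon$. Since $\L$ is an isometry,
\begin{equation*}
\|m(T) - m_F\|_p = \|\L x(T,\cdot) - \L x_F\|_p = \|x(T,\cdot) - x_F\|_p < \varepsilon,
\end{equation*}
which gives approximate controllability of \eqref{eq:moment_system} on $\M_p^n$. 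The converse direction is symmetric, using $\L^{-1}$ in place of $\L$ and the fact that $\L^{-1}$ is also an isometry.

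The only real subtlety, and the place I would be most careful, is the bookkeeping: I must make sure the trajectory correspondence is well-posed, i.e., that the solution of the moment system under a given control is unique and exists on $[0,T]$ whenever the ensemble solution does (and vice versa). This follows from the diffeomorphic pushforward $\L_*$ and the continuous differentiability assumption on $f$ and $g_i$ in $x$, but it is worth stating explicitly to rule out the possibility that a moment trajectory fails to lift to an $L^p$-valued trajectory. Once this well-posedness is checked, the isometric nature of $\L$ does all the remaining work, and the two approximate controllability statements become literally the same inequality viewed in the two isometric spaces.
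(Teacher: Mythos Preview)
Your proposal is correct and follows essentially the same approach as the paper: both arguments use the trajectory correspondence under the shared control inputs (Remark \ref{rmk:same_control}) together with the isometric isomorphism property of $\L$ from Corollary \ref{cor: moment-space and L-map} to transfer $\varepsilon$-bounds between $L^p(\O,\mathbb{R}^n)$ and $\M_p^n$. Your version is somewhat more explicit about the well-posedness of the trajectory correspondence, but the underlying idea is identical.
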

\begin{proof}
The one-to-one correspondence between elements in the state-spaces $L^p(\O,\mathbb{R}^n)$ and $\M_p^n$ of the two systems revealed in Corollary \ref{cor: moment-space and L-map} implies that the ensemble system in \eqref{eq:ensemble_affine} is at the state $x(t,\cdot)\in L^p(\O,\mathbb{R}^n)$ if and only if its moment system in \eqref{eq:moment_system} is at $m(t)=\L x(t,\cdot)\in\M_p^n$. Then, due to the observation in Remark \ref{rmk:same_control} that these two systems share the same control inputs, it suffices to show that $\L$ maps any $\varepsilon$-ball in $L^p(\O,\mathbb{R}^n)$ to an $\varepsilon$-ball in $\M_p^n$ for every $\varepsilon>0$. However, this is just a direct consequence of the isometric property of $\L$ proved in Corollary \ref{cor: moment-space and L-map}. %from an infinitesimal aspect.
\end{proof}
\begin{rem}
As shown in Theorem \ref{thm:controllability}, the ensemble system in \eqref{eq:ensemble_affine} and its moment system in \eqref{eq:moment_system} share the same controllability property, although they may consist of different number of dynamic units. Especially, in the case that $\Omega$ is uncountable, the moment transformation yields a controllability-preserving model reduction of an ensemble 
system containing uncountably many 
individual systems to its moment system containing countably many components.
%Specifically, in the case that $\Omega$ is uncountable, the transformation of uncountably many systems to countably many moment systems results in a controllability-preserving model reduction. 
\end{rem}

%==========================  Remark =========================
% \begin{rem}
% From the perspective of probability theory, $\beta$ can be considered as a random variable with a time-varying density $x(t,\cdot)$ with respect to the Lebesgue measure. Following from this probabilistic interpretation, the task of ensemble control is to force the distribution of $\beta$ to follow some desired density by applying external inputs to the system governing the dynamics of the density of $\beta$.\cred{do we still need this remark?}
% \end{rem}
%=============================================================
%This viewpoint also highlights the importance of the proposed moment-based control, especially, in applications involving a population of dynamical systems, where the dynamics of the population system are modeled in terms of density of the population. In such cases, feedback-based control, in general, is infeasible as accurate densities are difficult to obtain in practice due to lack of sufficient measurement data.
%, and it is common practice to obtain samples using which can easily used to be compute sample moments.}
%=============================================================
The results presented in this section pertinent to the case when the measurements are labeled since the definition of ensemble moments in \eqref{eq:classical_moments} involves the system label $\beta$ explicitly. Though not uncommon in applications, extending the ideas presented in this section for the case with unlabeled measurements will be considered in the next section, which will encompass a lot more applications.
%======================================================================
\section{Pattern Control Using Unlabeled Aggregated Measurements}\label{sec:Method_general moment}
%=======================================================================
Having established a strong relation in terms of the control-equivalence between the ensemble systems and their ensemble moment systems, we have devised a systematic framework to study the moment-based ensemble control problems with labeled aggregated measurements. In this section, we will extend the results developed in the previous section to %a general class of output functions that correspond to 
the case that arises in the context of unlabeled aggregated measurements.
%=======================================================================
\subsection{Output moments and the moment system}
%========================================================
%The ensemble moments introduced in the previous section, in practice, requires labeled measurement data. 
To relax the requirement of labels of measurement data in the moment-based ensemble control framework, it is necessary to use an output function $h$ depending on the system label $\beta$ only implicitly through the state variable $x(t,\beta)$, i.e., $h(\beta,x(t,\beta))=h(x(t,\beta))$. In addition, the definition of moments is then required to be independent of $\b$ as well. 

To this end, we define the notion of \emph{output moments} as 
\begin{align}\label{eq:moments_hk}
    \mathfrak{m}_k(t)=\int_{\mathbb{R}^n}h^k(x)d\nu_t(x)
\end{align}
for the ensemble system defined on  $L^p(\Omega,\mathbb{R}^n)$ in \eqref{eq:ensemble_affine}, where $h:\mathbb{R}^n\rightarrow\mathbb{R}^r$ is the output function of the system, $k=(k_1,\dots,k_r)\in\mathbb{N}^r$ is a multi-index, $\nu_t=(x_t)_\#\lambda$ is the pushforward of the Lebesgue measure $\lambda$ on $\O$ by the state $x_t(\cdot)=x(t,\cdot)$. As a result, the $k^{\rm th}$ output moment defined in \eqref{eq:moments_hk} satisfies
\begin{align}
\label{eq:pushforward_measure}
\int_{\mathbb{R}^n}h^kd\nu_t=\int_\Omega h^k\circ x_td\lambda.
\end{align}
%To relax the requirement of labeled measurement data, we begin by defining an output sequence or the output moments $\mathfrak{m}_k$ for $k=0,1,\ldots$, in terms of the unlabeled aggregated measurement. Note that in this case, it is necessary that the output function $h$ generating the aggregated measurement does not depend on the parameter $\b$ explicitly, that is, $h(\b,x(t,\b))=h(x(t,\b))$, where $h$ depends on $\b$ only implicitly through the state $x(t,\b)$.
%Consider the ensemble system defined on $L^p(\Omega,\mathbb{R}^n)$ in \eqref{eq:ensemble_affine}, we define the $k^{\rm th}$ output moment as 
%\begin{align}\label{eq:moments_hk}
 %   \mathfrak{m}_k(t)=\int_{\mathbb{R}^n}h^k(x)d\nu_t(x)
%\end{align}
%where $h:\mathbb{R}^n\rightarrow\mathbb{R}^r$ is the output function, $k=(k_1,\dots,k_p)$ is a multi-index, $\nu_t$ is the pushforward of the Lebesgue measure $\lambda$ on $\Omega$ by the state $x_t(\cdot)=x(t,\cdot)$, denoted by $\nu_t=(x_t)_\#\lambda$, i.e., $\nu_t(B)=\lambda(x_t^{-1}(B))$ for any Borel set $B\subseteq\mathbb{R}^n$. In particular, it can be shown that the pushforward operation preserves integrals as 
%\begin{align}
%\label{eq:pushforward_measure}
%\int_{\mathbb{R}^n}h^kd\nu_t=\int_\Omega h^k\circ x_td\lambda
%\end{align}
%\cite{folland_13_real}. 
Furthermore, if each individual system in the ensemble in \eqref{eq:ensemble_affine} is observable, it is possible to pick $h(x)=x$, the identity function, so that 
\begin{align}
\label{eq:output_moment}
\mathfrak{m}_k(t)=\int_{\mathbb{R}^n}x^kd\nu_t(x)={\int_\Omega x^k(t,\beta)d\beta.}
\end{align}
To guarantee that the output moments are well-defined, we restrict the state-space of the system to $L^{\infty}(\O,\mathbb{R}^n)$, in which case the pushforward measure $\nu_t=(x_t)_\#\lambda$ is supported in the range of $x_t$, more explicitly, inside the $n$-dimensional cube $[-\|x_t\|_\infty,\|x_t\|_\infty]^n$ centered at the origin with the side length $2\|x_t\|_\infty$.

\begin{rem}
\label{rmk:output_moment_probability}
Because the Lebesgue measure of $\O=[0,1]^d$ is 1, $(\Omega,\lambda)$ is a probability space. As a result, the state $x_t\in L^\infty(\Omega,\mathbb{R}^n)$ of the ensemble system can be interpreted as an $\mathbb{R}^n$-valued random variable on $(\Omega,\lambda)$. Moreover, the integral-preserving property of the pushforward operation guarantees that $\nu_t=(x_t)_\#\lambda$ is a probability measure on $\mathbb{R}^n$, which is the so-called 
(joint) probability distribution or law of $x_t$ in probability theory. In this case, the output moments $\mathfrak{m}_k(t)$ defined in \eqref{eq:output_moment} indeed coincide with the moments of the random variable $x_t$ in the sense of probability theory, and further, the solution of the ensemble system can be interpreted as a sample path of the stochastic process $\{x_t:t\geq0\}$.
\end{rem}

Recall that the output moments are defined using the output functions that are not explicitly related to $\b$. This ensures that the output moment sequence can be computed from unlabeled aggregated measurements. In principle, this loss of information %(as compared to labeled measurements) 
about the system label will need to be taken into account while analyzing the output moments in the context of ensemble control, and this investigation is carried out in the next section.
%In the next section, we investigate the moment problem associated with output moments in the derive some useful properties of the space of output moments. %of ensemble control system.}
%The relationship between the measure $\mu_t$ and its pushforward $\nu_t$ by the state $x(t,\cdot)$ shown in  \eqref{eq:pushforward_measure} motivates the use of the dynamics of the classical moment sequence $m(t)$ to study the dynamics of the corresponding output moment sequence $\mathfrak{m}(t)$.  impedes the ability of a control law to steer the ensemble in an approximating sense as prescribed in Definition \ref{def:ensmeble_controllability}. Nevertheless, using output moments, certain important control tasks can be performed.a strong relationship between the ensemble system and the system describing the dynamics of the output moments.
%=======================
\subsection{Output moment problem in the context of ensemble control}
%=======================
Analogous to the classical Hausdorff moment problem, the output moment problem, which is considered here, focuses on the relationship between $L^\infty$-functions and their output moment sequences. In particular, we denote the space of output moment sequences of functions in $L^\infty(\Omega)$ by $\hat{\M}_\infty$, then output moment sequences of functions in $L^\infty(\Omega)$ are in $\hat{\M}_\infty^n$. 
%$L^\infty(\Omega,\mathbb{R}^n)$ by {$\hat{\M}^n_\infty$.
%To fix the idea, it suffices to consider the case $\O=[0,1]$, and we denote the space of output moment sequences of functions in $L^{\infty}(\O)$ by $\hat\M_\infty$. 
However, different from ensemble moment sequences in $\M_\infty^n$, output moment sequences in $\hat\M_\infty^n$ do not correspond to functions in $L^\infty(\O,\mathbb{R}^n)$ in an one-to-one fashion. Specifically, different $L^\infty$-functions may have the same output moment sequence as shown in the following example.

\begin{ex}
\label{ex:output_moment}
Consider the indicator functions of $[0,1/2]$ and $[1/2,1]$ defined on $[0,1]$,  respectively, 
$$I_1(\beta)=\begin{cases}1,\ 0\leq\beta\leq\frac{1}{2}\\ 0,\ \frac{1}{2}<\beta\leq 1 \end{cases},\quad I_2(\beta)=\begin{cases}0,\ 0\leq\beta<\frac{1}{2}\\ 1,\ \frac{1}{2}\leq\beta\leq 1 \end{cases}.$$
The pushforwards $\nu_1=(I_1)_\#\lambda$ and $\nu_2=(I_2)_\#\lambda$ of the Lebesgue measure $\lambda$ on $[0,1]$ are measures supported on the discrete space $\{0,1\}$ satisfying $\nu_i(\{j\})=1/2$ for all $i=1,2$ and $j=0,1$, which implies $\nu_1=\nu_2$ and both of $I_1$ and $I_2$ follows the Bernoulli distribution. Consequently, $I_1$ and $I_2$ have the same output moment sequence as $\mathfrak{m}_0=1$ and $\mathfrak{m}_k=1/2$ for all $k=1,2,\dots$.
\end{ex}

In addition to {illustrating} the lack of one-to-one correspondence between elements in $\hat{\M}_\infty^n$ and $L^\infty(\O,\mathbb{R}^n)$, Example \ref{ex:output_moment} provides a hint regarding the output moment problem: Output moment sequences in $\hat\M_\infty^n$ are determined by measures pushforwarded by $L^\infty$-functions rather than $L^\infty$-functions themselves. 
Accordingly, we can consider a variation of the classical moment problem, namely, the output moment problem concerning with determining whether an output moment sequence uniquely represents a pushforward measure.
%Therefore, we can consider a variation of the moment problem, which is particularly relevant in the context of output moments, that is, the moment problem that concerns with determining whether an (output) moment sequence uniquely represents a pushforward measure. 
Notice that, for any function $\varphi\in L^\infty(\Omega,\mathbb{R}^n)$, its output moment sequence $\mathfrak{m}_k=\int_{\mathbb{R}^n}x^kd(\varphi_\#\lambda)$ is the moment sequence of the probability measure $\varphi_\#\lambda$, which is compactly supported as discussed above Remark \ref{rmk:output_moment_probability}, and hence, the Hausdorff moment problem is applicable to show the existence and uniqueness of the (probability) measure representing a given output moment sequence. Formally, let $$\mathcal{P}^n=\{\nu:\nu=\varphi_\#\lambda\text{ and }\varphi\in L^\infty(\O,\mathbb{R}^n)\}$$ denote the space of probability measures on $\mathbb{R}^n$ that are pushforwards of the Lebesgue measure $\lambda$ on $[0,1]^d$ by $L^\infty$-functions, then the output moment problem
gives an one-to-one correspondence between elements in $\mathcal{P}^n$ and $\hat\M_\infty^n$.

%does not applied to the study of output moment sequences. Fortunately, as discussed in Remark \ref{rmk:output_moment_probability}, $\varphi_\#\lambda$ is always a probability measure, so that the output moment problem can be solved by tools in probability theory.

%\begin{lem}
%\label{lem:output_moment}
%Let $\nu$ be a probability measure on $\mathbb{R}$ having finite moments $\mathfrak{m}_k=\int_\mathbb{R}x^kd\nu(x)$ of all orders. Then, $\nu$ is the only probability measure with moments $\mathfrak{m}_k$, $k\in\mathbb{N}$ if the power series $\sum_{k=0}^\infty\mathfrak{m}_kr^k/k!$ has a positive radius of convergence.
%\end{lem}
%\begin{proof}
%See \cite{Billingsley95}.
%\end{proof}

\begin{thm}
\label{thm:output_moment_problem}
The map $\hat{\mathcal{L}}:\mathcal{P}^n\rightarrow\hat\M_\infty^n$, assigning each probability measure in $\mathcal{P}^n$ its moment sequence in $\hat\M_\infty^n$, is bijective. 
\end{thm}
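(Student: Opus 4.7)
The plan is as follows. Surjectivity of $\hat{\mathcal{L}}$ is immediate from the construction of $\hat{\M}_\infty^n$: this space is defined as the set of moment sequences arising from functions in $L^\infty(\O,\mathbb{R}^n)$ via \eqref{eq:output_moment}, and by the change-of-variables identity \eqref{eq:pushforward_measure} these are precisely the moment sequences of the pushforward measures comprising $\mathcal{P}^n$. So the only substantive content of the theorem is the injectivity of $\hat{\mathcal{L}}$.

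For injectivity, I would start by taking $\nu_1,\nu_2\in\mathcal{P}^n$ with $\hat{\mathcal{L}}\nu_1=\hat{\mathcal{L}}\nu_2$, choosing representatives $\nu_i=(\varphi_i)_\#\lambda$ with $\varphi_i\in L^\infty(\O,\mathbb{R}^n)$, and setting $M=\max(\|\varphi_1\|_\infty,\|\varphi_2\|_\infty)$. The first key observation is that both $\nu_1$ and $\nu_2$ are probability measures supported in the common compact cube $K=[-M,M]^n\subset\mathbb{R}^n$. Hence the problem reduces to the classical fact that a finite Borel measure on a compact subset of $\mathbb{R}^n$ is determined by its polynomial moments.

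To execute this, I would invoke the Stone--Weierstrass theorem: the polynomial algebra $\mathbb{R}[x_1,\dots,x_n]$ is a unital subalgebra of $C(K)$ that separates points, hence it is uniformly dense in $C(K)$. The hypothesis that $\nu_1$ and $\nu_2$ agree on all monomials $x^k$ extends by linearity to $\int_K p\,d\nu_1=\int_K p\,d\nu_2$ for every polynomial $p$. Since $\nu_i(K)=1<\infty$, uniform approximation combined with the dominated convergence theorem lifts this equality to $\int_K f\,d\nu_1=\int_K f\,d\nu_2$ for every $f\in C(K)$. The Riesz representation theorem then forces $\nu_1=\nu_2$ as Radon measures on $K$, and hence as Borel measures on $\mathbb{R}^n$ after extending by zero outside $K$, which proves $\hat{\mathcal{L}}$ is injective.

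The main subtlety --- which I would not describe as a genuine obstacle --- is that the supports of $\nu_1$ and $\nu_2$ are a priori distinct, but this is neutralised by choosing the common compact cube $K$. The $L^\infty$ hypothesis is essential at this step: for $L^p$ with $p<\infty$ the pushforward measure can have unbounded support, and classical counterexamples (e.g.\ the lognormal distribution) show that the moment problem may become indeterminate. An alternative path would be to rescale $K$ onto $[0,1]^n$ and invoke an $n$-dimensional extension of Lemma \ref{lem:Hausdorff_2d}, but since uniqueness of the representing measure is not spelled out in the statement of that lemma, the Stone--Weierstrass argument above is the most transparent route.
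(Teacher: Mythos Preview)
Your proof is correct and follows the same overall strategy as the paper: surjectivity by construction of $\hat\M_\infty^n$, and injectivity by reducing to determinacy of the moment problem for compactly supported measures on $\mathbb{R}^n$. The paper's execution differs in two minor respects. First, instead of simply taking $M=\max(\|\varphi_1\|_\infty,\|\varphi_2\|_\infty)$ as you do, the paper shows that the componentwise $L^\infty$-norms of the two representing functions actually coincide, by observing that for the multi-index $k$ with $k_i=2q$ and $k_j=0$ ($j\neq i$) one has $\mathfrak m_k^{1/2q}=\|\varphi_i\|_{2q}$ and letting $q\to\infty$; this lands on the same common cube you reach directly, so your shortcut loses nothing. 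Second, where you spell out the Stone--Weierstrass/Riesz argument explicitly, the paper simply invokes ``the Hausdorff moment problem'' on that cube. Your version is more self-contained; the paper's extracts an additional fact (equality of the essential suprema) that is not actually needed for bijectivity.
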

\begin{proof}
The surjectivity of $\hat{\mathcal{L}}$ directly follows from the fact that elements in $\mathcal{P}^n$ are pushforward measures by $L^\infty$-functions so that all orders of their moments are well-defined.

To prove the injectivity, it suffices to show that if $\varphi,\psi\in L^\infty(\Omega,\mathbb{R}^n)$ have the same output moment sequence $\mathfrak{m}\in\hat\M_\infty^n$, then they pushforward the Lebesgue measure $\lambda$ on $\Omega$ to the same measure on $\mathbb{R}^n$. Let $\varphi_i,\psi_i\in L^\infty(\Omega)$, and $k_i\in\mathbb{N}$ be the $i^{\rm th}$ components of $\varphi$, $\psi$, and $k$, respectively. Note that for the multi-index $k$ in the form of $k_i=2q$ for some $q\in\mathbb{N}$ and $k_j=0$ for $j\neq i$, we have {$\mathfrak{m}_k^{1/2q}=\|\varphi_i\|_{2q}=\|\psi_i\|_{2q}$.} By letting $q\rightarrow\infty$, we obtain $\|\varphi_i\|_\infty=\|\psi_i\|_\infty$ for each $i=1,\dots,n$, which implies $\|\varphi\|_\infty=\|\psi\|_\infty$. As a result, $\varphi_\#\lambda$ and $\psi_\#\lambda$ are supported on the same compact set. The application of the Hasudorff moment problem to Borel measures on this set then concludes $\varphi_\#\lambda=\psi_\#\lambda$. 
%we pick $\mathfrak{m}=(\mathfrak{m}_0,\mathfrak{m}_1,\dots)\in\hat\M_\infty$ representing a probability measure $\nu=\varphi_\#\lambda$ in $\mathcal{P}$ for some $\varphi\in L^\infty(\Omega)$. By \eqref{eq:output_moment}, the moment sequence $\mathfrak{m}$ satisfies 
%$$\mathfrak{m}_k=\int_\Omega\varphi^kd\lambda\leq\int_\Omega|\varphi|^kd\lambda\leq\|\varphi\|_\infty^k$$
%for all $k\in\mathbb{N}$. As a result, the power series
%$$\sum_{k=0}^\infty\frac{\mathfrak{m}_kr^k}{k!}\leq\sum_{k=0}^\infty\frac{(\|\varphi\|_\infty r)^k}{k!}=e^{\|\varphi\|_\infty r}$$
%is bounded by the exponential function $e^{\|\varphi\|_\infty r}$, so that its convergence radius is $\infty$. Lemma \ref{lem:output_moment} then implies that $\nu$ is the only probability measure having the moment sequence $\mathfrak{m}$.
\end{proof}

To prepare the study of controllability of output moment systems, it is required to equip the space $\hat\M_\infty^n$ with a metric for the purpose of measuring the distance between two output moment sequences. To this end, notice that for any $\varphi\in L^\infty(\O,\mathbb{R}^n)$, its output moments satisfy
\begin{align*}
\mathfrak{m}_k=\int_\Omega \varphi^k(\beta)d\beta\leq\int_\Omega |\varphi^k(\beta)|d\beta\leq \|\varphi\|_\infty^{|k|},
\end{align*}
where $|k|=k_1+\cdots+k_n$ is the order of the multi-index $k=(k_1,\dots,k_n)\in\mathbb{N}^n$. This implies that $\mathfrak{m}_k^{1/|k|}\leq\|\varphi\|_\infty<\infty$ for all $k\in\mathbb{N}^n$, and hence the \emph{radical} of $\mathfrak{m}$, that is, the sequence ${\rm rad}(\mathfrak{m})$ with the $k^{\rm th}$ component $\mathfrak{m}_k^{1/|k|}$, belongs to  $\ell^\infty(\mathbb{N}^n)$, the Banach space of bounded $n$-sequences, with the $\ell^\infty$-norm $\|{\rm rad}(\mathfrak{m})\|_\infty\leq\|\varphi\|_\infty$. As a result, we can define the distance of two moment sequences $\mathfrak{m}$ and $\mathfrak{n}$ as the $\ell^\infty$-distance of their radicals, that is, $d(\mathfrak{m},\mathfrak{n})=\|{\rm rad}(\mathfrak{m})-{\rm rad}(\mathfrak{n})\|_\infty=\sup_{k\in\mathbb{N}}\big|\mathfrak{m}_k^{1/|k|}-\mathfrak{n}_k^{1/|k|}\big|$. Moreover, because of the one-to-one correspondence between probability measures in $\mathcal{P}^n$ and output moment sequences $\hat\M_\infty^n$ revealed in Theorem \ref{thm:output_moment_problem}, $\mathcal{P}^n$ also inherits the distance function $d$ from $\hat\M_\infty^n$ as $d(\mu,\nu)=d(\hat{\L}\mu,\hat{\L}\nu)$ for any $\mu,\nu\in\mathcal{P}^n$.

The detailed investigation into the output moment problem presented in this section provides the preparation that is needed for the study of output moment systems and their relationships with ensemble control systems, which is the main focus of the next section.

%======================================
%\subsection{Dynamics of output moments for ensemble systems}
\subsection{Pattern formation for ensemble systems via output moments}
%======================================
As discussed in the previous section, the output moment sequence of an ensemble system provides information about the overall distribution of the whole ensemble instead of the behavior of each individual system. This nature of output moments enables the utilization of them in the study of pattern formation problems for ensemble  systems. To systematically study this problem, we first introduce the notion of pattern controllability for ensemble systems.

\begin{definition}[Pattern controllability]
An ensemble system $\frac{d}{dt}x(t,\b)=F(x(t,\b),\b,u(t))$ defined on the function space $\mathcal{F}(\O,M)$, where the parameter $\beta$ takes values on the measurable space $(\O,\mu)$ with $\mu$ a Borel measure, is pattern controllable if for any Borel measure $\nu$ on $M$, initial state $x_0\in\mathcal{F}(\O,M)$, and $\varepsilon>0$, there exists a control law $u(t)$ that steers the system to a final state $x_T\in\mathcal{F}(\O,M)$ such that $d\big((x_T)_\#\mu,\nu\big)<\varepsilon$ 
in a finite time $T>0$. 
\end{definition}

Similar to ensemble controllability defined in Definition \ref{def:ensmeble_controllability}, pattern controllability is also defined in the sense of approximation, and in this case, the distribution of the ensemble represented in terms of the measure pushed forward by the state takes precedence over the profile of the ensemble system itself. Moreover, to be consistent with previous sections, we also primarily focus on control-affine ensemble systems defined on $L^{\infty}(\O,\mathbb{R}^n)$ of the form \eqref{eq:ensemble_affine}
%$$\frac{d}{dt}x(t,\beta)=f(x(t,\beta),\beta)+\sum_{i=1}^mu_i(t)g(x(t,\beta),\beta)$$
with the parameter space $\Omega=[0,1]^d$ equipped with the Lebesgue measure $\lambda$. However, for such a system, due to the lack of one-to-one correspondence between elements in $L^\infty(\O,\mathbb{R}^n)$ and $\hat\M_\infty^n$, it is impossible to pushforward the vector fields on $L^\infty(\O,\mathbb{R}^n)$ governing the ensemble dynamics to some globally well-defined vector fields on $\hat\M_\infty^n$ governing the dynamics of its output moment sequence as presented in the proof of Theorem \ref{thm: affine to affine}. Fortunately, this output moment transformation for ensemble systems is still valid locally, which is summarized in the following result.

\begin{thm}
The dynamics of the output moment sequence of a control-affine ensemble system defined on $L^\infty(\O,\mathbb{R}^n)$ is a locally control-affine system on $\hat\M^n_\infty$.
\end{thm}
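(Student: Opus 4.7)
The plan is to mirror the proof of Theorem \ref{thm: affine to affine}, computing the time derivative of the output moment sequence and extracting its control-affine structure, but with the understanding that the map $\hat\L$ is not globally invertible on $L^\infty$-functions; accordingly, the resulting vector fields on $\hat\M_\infty^n$ will only be defined locally, via a section of $\hat\L$.

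First, I would differentiate $\mathfrak{m}_k(t)=\int_\O x(t,\beta)^k\,d\beta$ with respect to time. Using the dominated convergence theorem (justified by the $L^\infty$ bound on the state together with the continuous differentiability of $f$ and $g_i$ in $x$) to interchange the time derivative with the integral, and applying the chain rule to the monomial $x^k=x_1^{k_1}\cdots x_n^{k_n}$, the control-affine form of the ensemble dynamics yields
\begin{equation*}
\dot{\mathfrak{m}}_k(t)=\int_\O \nabla_x(x^k)\cdot f(x(t,\beta),\beta)\,d\beta+\sum_{i=1}^l u_i(t)\int_\O \nabla_x(x^k)\cdot g_i(x(t,\beta),\beta)\,d\beta,
\end{equation*}
which manifestly exhibits affine dependence on the $u_i$.

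Second, I would bundle these integrals componentwise into two operators $F,G_i:L^\infty(\O,\mathbb{R}^n)\to\hat\M_\infty^n$, so that the display above reads $\dot{\mathfrak{m}}(t)=F(x_t)+\sum_i u_i(t)G_i(x_t)$. These are well-defined at the level of $L^\infty$-states, but they do not, a priori, descend to vector fields on $\hat\M_\infty^n$ because Example \ref{ex:output_moment} shows $\hat\L$ is not injective on $L^\infty$. To pass to the moment space, I would fix a base point $\mathfrak{m}_0\in\hat\M_\infty^n$, choose a representative $x_0\in\hat\L^{-1}(\mathfrak{m}_0)$, and construct a section $\sigma$ of $\hat\L$ on a neighborhood $U\subset\hat\M_\infty^n$ of $\mathfrak{m}_0$, for instance by restricting to a canonical class of representatives such as monotone rearrangements in the case $d=1$. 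Setting $\bar f:=F\circ\sigma$ and $\bar g_i:=G_i\circ\sigma$ on $U$ then gives $\dot{\mathfrak{m}}=\bar f(\mathfrak{m})+\sum_i u_i\bar g_i(\mathfrak{m})$ on $U$, which is the desired local control-affine system.

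The main obstacle is the third step: producing a section $\sigma$ with sufficient regularity so that $\bar f$ and $\bar g_i$ inherit continuity (or smoothness) from $f$ and $g_i$. The fibers of $\hat\L$ are closed under all measure-preserving rearrangements of $\O$ and are therefore large, so selecting a single representative per fiber in a continuous fashion is subtle. A natural resolution is to invoke a canonical rearrangement that depends continuously on the output moment sequence in the topology induced by the radical distance introduced before Theorem \ref{thm:output_moment_problem}; verifying this continuity, and checking that it is compatible with the integrals defining $F$ and $G_i$, is the crux of the argument and is precisely what forces the qualifier \emph{locally} in the statement.
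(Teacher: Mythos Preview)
Your approach and the paper's share the same outline: differentiate the output moments, read off the affine dependence on $u$, and then invoke local invertibility of the output-moment map to rewrite the resulting vector fields as functions of $\mathfrak{m}$ rather than $x_t$. The difference is in how that last step is justified. The paper argues abstractly: it denotes by $\bar\L$ the map $x\mapsto\mathfrak{m}$, observes that $\bar\L$ is surjective and that $L^\infty(\O,\mathbb{R}^n)$ and $\hat\M_\infty^n$ have the same dimension, and concludes that $\bar\L$ is a local diffeomorphism, after which the proof of Theorem~\ref{thm: affine to affine} applies verbatim on each chart. You instead compute $\dot{\mathfrak{m}}_k$ explicitly and then build a local section $\sigma$ of $\bar\L$ (e.g., by monotone rearrangement) to define $\bar f=F\circ\sigma$ and $\bar g_i=G_i\circ\sigma$. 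Your route is more concrete and, usefully, isolates the regularity of the section as the genuine technical content of the word ``locally''---a point the paper's dimension-counting argument simply asserts and which, in infinite dimensions, is not automatic from surjectivity alone.
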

\begin{proof}
Let $\bar\L:L^\infty(\O,\mathbb{R}^n)\rightarrow\hat\M_\infty^n$ denote the map assigning each $L^\infty$-function its output moment sequence, then $\bar\L$ is surjective. Note that $L^\infty(\O,\mathbb{R}^n)$ and $\hat\M_\infty^n$ have the same dimension, then, together with the surjectivity of $\bar\L$, we know that $\bar\L$ must be a local diffeomorphism. Consequently, the same proof as Theorem \ref{thm: affine to affine} can be applied locally to show that the output moment system of a control-affine ensemble system is control-affine on any open subset of $\hat\M_\infty^n$ where $\bar\L$ is restricted to a diffeomorphism.
\end{proof}

As shown in Theorem \ref{thm:output_moment_problem}, from the perspective of probability theory and statistics, the output moment problem gives an one-to-one correspondence between probability distributions of $L^\infty$-random variables and their moment sequences. In the context of control and dynamical systems theory, this immediately leads to an equivalence between pattern controllability of ensemble systems and approximate controllability of their output moment systems. %which gives rise to the following result.

\begin{thm}
\label{thm:equivalence_output_moment}
An ensemble system defined on $L^\infty(\O,\mathbb{R}^n)$ is pattern controllable if and only if its output moment system is approximately controllable on $\hat\M_\infty^n$.
\end{thm}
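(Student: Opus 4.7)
The plan is to exploit the isometric bijection $\hat{\L}:\mathcal{P}^n\to\hat{\M}_\infty^n$ established in Theorem \ref{thm:output_moment_problem} together with the observation in Remark \ref{rmk:same_control} that the moment transformation preserves the control inputs. Concretely, I would first spell out that for any trajectory $x(t,\cdot)$ of the control-affine ensemble system driven by $u(t)$, the pushforward $\nu_t=(x_t)_\#\lambda$ lies in $\mathcal{P}^n$, and its image $\hat{\L}(\nu_t)$ is precisely the output moment sequence $m(t)=\bar{\L}x_t$ evolving under the (locally) control-affine moment system with the same $u(t)$. Thus, trajectories of the two systems are in bijective, time-synchronized correspondence.

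For the forward implication, fix a target $m_F\in\hat{\M}_\infty^n$ and $\varepsilon>0$. By Theorem \ref{thm:output_moment_problem}, there is a unique $\nu_F\in\mathcal{P}^n$ with $\hat{\L}\nu_F=m_F$. Applying pattern controllability to the target measure $\nu_F$ produces a control $u(t)$ and time $T$ with $d((x_T)_\#\lambda,\nu_F)<\varepsilon$. Because the metric on $\mathcal{P}^n$ is defined through $d(\mu,\nu):=d(\hat{\L}\mu,\hat{\L}\nu)$, this immediately yields $d(m(T),m_F)<\varepsilon$, establishing approximate controllability of the moment system. The converse is symmetric: given $\nu_F\in\mathcal{P}^n$ and $\varepsilon>0$, set $m_F=\hat{\L}\nu_F$, use approximate controllability of the moment system to steer $m(T)$ into the $\varepsilon$-ball around $m_F$, then transport the estimate back through the isometry to conclude $d((x_T)_\#\lambda,\nu_F)<\varepsilon$.

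The main technical subtlety will be reconciling the target classes in the two definitions: pattern controllability is phrased for arbitrary Borel measures $\nu$ on $M$, whereas $\hat{\L}$ bijects only onto $\hat{\M}_\infty^n$ from $\mathcal{P}^n$. I would resolve this either by restricting attention to targets in $\mathcal{P}^n$ (the only class actually reachable, since every $(x_T)_\#\lambda$ is automatically in $\mathcal{P}^n$) or by a density argument showing that the relevant Borel targets can be approximated by pushforwards of $L^\infty$-functions. A secondary concern is that $\bar{\L}$ is only a local diffeomorphism, so the control-affine structure of the moment system is a local statement; however, since controllability concerns the existence of trajectories joining two points and the two systems share their controls with trajectories globally identified through $\bar{\L}$, the local nature of the diffeomorphism does not obstruct the equivalence argument. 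If any step turns out to be nontrivial, it will be this matching of target spaces rather than the transport of the $\varepsilon$-estimates themselves.
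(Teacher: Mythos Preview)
Your proposal is correct and follows essentially the same route as the paper: the paper's proof is a one-line reference back to Theorem~\ref{thm:controllability}, replacing $\L$ by $\hat{\L}$, which amounts to exactly the isometric-bijection-plus-shared-controls argument you spell out. You are in fact more careful than the paper in flagging the target-class mismatch (arbitrary Borel measures versus $\mathcal{P}^n$) and the merely local nature of $\bar{\L}$; the paper does not address either point.
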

\begin{proof}
The proof is the same as the proof of Theorem \ref{thm:controllability} by replacing $\L$ by $\hat\L$.
\end{proof}
Thus far, we have established the equivalence of controlling an ensemble system and its two moment systems in two different types of ensemble control tasks, respectively. 
%we have established the control equivalence for the ensemble system and its associated moment systems corresponding to both the ensemble moments and the output moments, induced by aggregated measurements. 
Supported by this fundamental investigation, in the next section, we demonstrate the application of the proposed moment-based ensemble control framework to the design of ensemble control laws.
% %======================= Section:  Examples ===================
%\section{Applications to Moment-based Control Synthesis}\label{sec:Examples}
\section{Moment-Based Ensemble Control Design}\label{sec:Examples}
% %==================================================
%Here, 
To incorporate the approximation idea of ensemble controllability into ensemble control law design tasks, we formulate ensemble control problems for systems with input-affine nonlinear dynamics as feedback stabilization problems for their associated moment systems. This further gives rise to a systematic feedback control design strategy for closing the loop of ensemble systems, which has been a missing element in ensemble control literature.

%we focus on the feedback stabilization problem for an ensemble control systems with input-affine nonlinear dynamics.
%=======================================================
\subsection{Ensemble control through moment stabilization} \label{subsec:lyap}
%====================================================
Given a control-affine ensemble system of the form \eqref{eq:ensemble_affine} that is ensemble controllable on $L^p(\O,\mathbb{R}^n)$, let $m_F\in\mathcal{M}_p^n$ denote the moment sequence representing the desired target profile $x_F\in {L}^p(\Omega,\mathbb{R}^n)$, and $e(t) = m(t)- m_F$ be the error between the current and desired moment sequences. To steer the ensemble system arbitrarily close to the target state $x_F$ is equivalent to asymptotically stabilize the error $e(t)$ %of its moment system 
to $0$, and this stabilization problem can be accomplished by using the Lyapunov method. In particular, we may define a candidate Lyapunov function in the form of 
\begin{align*}
    V(t) = L(e(t)),
\end{align*}
which is differentiable with respect to $e(t)$ and satisfies $L(e(t))>0$ for $e(t)\ne 0$.
Taking the time-derivative of the Lyapunov function along the trajectory of the moment system in  \eqref{eq:moment_system} reveals the dynamics of the Lyapunov function
\begin{align*}
  \dot{V} = \nabla L(e)\cdot \dot{e} = \nabla L(e)\cdot  \Big[\L_*f(m)+\sum_{i=1}^l u_i\L_*g_{i}(m)\Big],
\end{align*}
which can be controlled by $u_1$, $\dots$, $u_l$, the control inputs applied to the ensemble and moment systems, and where `$\cdot$' denotes the Euclidean inner product. In particular, we pick the control inputs $u_1$, $\dots$, $u_l$ such that $\dot{V}(t)<0$ holds for $t>0$ to guarantee $e(t)\rightarrow0$ asymptotically, and the existence of such control inputs is guaranteed by controllabilility of the moment system. To be more specific in how to pick such control inputs, we denote $\bar f(m)=\L_*f(m)$, $\bar g(m)=[\ \L_*g_1(m)\mid\cdots\mid\L_*g_l(m)\ ]$ and $u=[\ \bar u_1\mid\cdots\mid\bar u_l\ ]'$,  then $u$ satisfies the inequality
\begin{align}
\label{eq:feedback_control}
    \nabla L(e) \cdot \big(\bar g(m)u\big) < - \nabla L(e)\cdot \bar f(m),
\end{align}
where $'$ denotes the transpose of matrices. Solving this inequality for $u$ in terms of $m$ will result in a feedback control law that stabilizes the moment system in \eqref{eq:moment_system} to $m_F$ asymptotically. Note that the inequality in \eqref{eq:feedback_control} is linear in $u$, hence it is solvable if $\nabla L(e)\bar g(m) \neq0$, the $l$-dimensional zero row vector, along the moment trajectory. Equivalently, it suffices to argue that the moment system can be controlled to avoid intersecting the solution set $S$ of the equation $\nabla L(e)\bar g(m) =0$ in $\M_p^n$. From the knowledge of differential geometry, we know that $S$ is a codimension $l$ submanifold of $\M_p^n$. Moreover, we can always pick a Lyapunov function $L$ so that the function $\nabla L(e)\bar g(m)$ is proper, e.g., $L$ is smooth and compactly supported. In this case, $S$ is a compact submanifold of $\M_p^n$, and hence $S\backslash\M_p^n$ is path connected so that the moment system can be driven away from $S$, guaranteed by its controllability.
%However, in the case of $\nabla L(e)\bar g(m)'=0$, it is necessary that $\nabla L(e)' \in {\rm ker}(\bar g(m))$, the kernel of $\bar g(m)$. From the knowledge of linear algebra, we know that ${\rm ker}(\L_*g(m)')$ is a proper subspace of $\mathcal{M}_p$ when $\L_*g(m) \ne 0$, and hence the definition of $L$ can be modified so that $\nabla L(e) \not\in {\rm ker}(\L_*g(m)'))$. The feasibility of this modification is guaranteed by ensemble controllability of the system.

The above discussion gives a constructive proof for the following theorem.
%=================================  Theorem ===========================
\begin{thm}
\label{thm:moment-feedback}
For any ensemble controllable control-affine  system on $L^p(\O,\mathbb{R}^n)$ in the form of \eqref{eq:ensemble_affine}, there exists a moment-feedback control law $u(m(t))=(u_1(m(t)),\dots,u_l(m(t)))$ asymptotically stabilizing the system to any final profile in $L^p(\O,\mathbb{R}^n)$, where $m(t)$ denotes the moment sequence of the state of the system. %\cred{highlight locally?}
%Given a controllable ensemble system in the form of \eqref{eq:ensemble_affine} and any initial and final profiles in $L_p(\Omega,\mathbb{R}^n)$, there exists a feedback control law $u(m)=(u_1(m),\dots,u_l(m))$ in terms of the ensemble moment sequence $m$ stabilizing the system to the final profile asymptotically.
\end{thm}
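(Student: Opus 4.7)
The plan is to reduce the ensemble stabilization problem to asymptotic stabilization of the associated moment system on $\M_p^n$ and then construct an explicit moment-feedback law by the Lyapunov method. First I would invoke Theorem \ref{thm:controllability} together with Corollary \ref{cor: moment-space and L-map}: ensemble controllability of \eqref{eq:ensemble_affine} on $L^p(\O,\mathbb{R}^n)$ is equivalent to approximate controllability of the moment system \eqref{eq:moment_system} on $\M_p^n$, and since $\L$ is an isometric isomorphism, convergence $m(t)\to m_F:=\L x_F$ in $\M_p^n$ is the same as $\|x(t,\cdot)-x_F\|_p\to 0$. By Remark \ref{rmk:same_control} the control inputs that appear in the two systems are literally the same $u_1,\dots,u_l$, so it suffices to construct a state-feedback $u(m)$ that asymptotically stabilizes the error $e(t)=m(t)-m_F$ on $\M_p^n$.

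Second, I would pick a smooth, positive-definite candidate Lyapunov function $V(t)=L(e(t))$ and differentiate along \eqref{eq:moment_system} to get
\begin{align*}
\dot V = \nabla L(e)\cdot\bar{f}(m) + \nabla L(e)\,\bar{g}(m)\,u,
\end{align*}
where $\bar{f}=\L_* f$ and $\bar{g}=[\,\L_* g_1\mid\cdots\mid\L_* g_l\,]$. Wherever the row vector $\nabla L(e)\bar{g}(m)$ is nonzero, the inequality \eqref{eq:feedback_control} is a strictly feasible affine condition in $u$, and a Sontag-type min-norm choice
\begin{align*}
u(m)= -\frac{\nabla L(e)\cdot\bar{f}(m)+\alpha(e)}{\|\nabla L(e)\bar{g}(m)\|^2}\bigl(\nabla L(e)\bar{g}(m)\bigr)',
\end{align*}
with $\alpha(e)>0$ for $e\neq 0$, yields $\dot V=-\alpha(e)<0$ off the target. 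Composing $u(m(t))$ with the moment dynamics gives a closed-loop system whose Lyapunov function decreases strictly, driving $e(t)\to 0$ and, via the isometry $\L^{-1}$, driving $x(t,\cdot)\to x_F$ in $L^p(\O,\mathbb{R}^n)$.

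The main obstacle is the singular set $S=\{m\in\M_p^n:\nabla L(e)\bar{g}(m)=0\}$, on which the feedback formula breaks down. I would argue that because $\bar{g}$ inherits $l$ independent control directions from the ensemble system, the defining equations cut out $l$ independent scalar constraints and so $S$ is generically a codimension-$l$ Banach submanifold of $\M_p^n$; choosing $L$ smooth and compactly supported further makes $\nabla L(e)\bar{g}(m)$ proper, which forces $S$ to be compact and hence $\M_p^n\setminus S$ to be path connected. Whenever a trajectory of the closed-loop moment system approaches $S$, approximate controllability from Theorem \ref{thm:controllability} provides a short open-loop detour that carries $m(t)$ around $S$, after which the stabilizing feedback resumes with $V$ still decreasing on the aggregate. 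The delicate point, which I expect to be the technical heart of the argument, is justifying this global avoidance of $S$ in the infinite-dimensional space $\M_p^n$: codimension-$l$ alone does not guarantee a nonempty complement, so one must lean on the controllability hypothesis to supply the transverse motions needed for the detours. Once the feedback is patched across these detours it defines a global moment-feedback law $u(m(t))=(u_1(m(t)),\dots,u_l(m(t)))$ with the desired asymptotic stabilization property, proving the theorem.
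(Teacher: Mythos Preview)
Your proposal is correct and follows essentially the same approach as the paper: the paper's proof is the constructive Lyapunov argument given in Section~\ref{subsec:lyap} immediately preceding the theorem, which likewise defines $V=L(e)$, derives the affine inequality \eqref{eq:feedback_control} in $u$, and handles the singular set $S=\{\nabla L(e)\bar g(m)=0\}$ by the same codimension-$l$/properness/path-connectedness reasoning combined with controllability. Your explicit Sontag-type min-norm formula is a natural refinement the paper does not write out, and your candid flagging of the infinite-dimensional avoidance argument as the delicate step is well placed---the paper treats that point at exactly the same level of rigor you anticipate.
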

%==============================================================
\begin{rem}
Note that although Theorem \ref{thm:moment-feedback} is only stated for the case of labeled ensemble systems with classical ensemble moments, it definitely works for unlabeled ensemble systems with output ensemble moments as well. Specifically, in such a case, the considered unlabeled ensemble system can be asymptotically stabilized to any desired pattern by using an output moment-feedback control, given that the system is pattern controllable.
\end{rem}
%because this Lyapunov based moment-feedback control design method is irrelevant to the system labels, 
%Note that this Lyapunov based control design can also be stated for the output moment system, and to avoid repetition, it is not explicitly stated here. In the following, we provide some simulation examples to demonstrate the feasibility of the proposed moment-based approached for control design applications.
In the rest of the paper, we revisit Examples \ref{ex:Bloch} and \ref{ex:Nonlinear_ensemble} to accomplish some practical ensemble control tasks for the systems in \eqref{eq:bloch} and \eqref{eq:nonlinear_ex} by using the moment-feedback control design method.

%\paragraph*{Revisiting Example 1} 
\begin{ex}
A typical control task in NMR spectroscopy is to steer the ensemble Bloch system in \eqref{eq:bloch} from $x(0,\beta)=(0,0,1)'$ to $x(T,\beta)=(1,0,0)'$ for all $\beta\in[1-\delta,1+\delta]$, which corresponds to a uniform $\pi/2$ rotation about $y$-axis. The control input that achieves this task can be obtained by equivalently steering the associated moment system from $m_0$ to $m_F$, where $m_0$ and $m_F$ are the moment sequences corresponding to the constant functions $x(0,\beta)$ and $x(T,\beta)$, respectively. In particular, we consider a 10\% variation in the strength of the applied rf field, i.e., $\delta=0.1$ so that $\beta \in[0.9,1.1]$.

To synthesize a moment-feedback control law, we truncate the moment sequence of the Bloch ensemble up to order $N$, and then define a candidate Lyapunov function $$V(t)=L(e(t))=\sum_{j=1}^Ne_j(t)'e_j(t),$$ 
where $e_j(t)=m_j(t)-m_F\in\mathbb{R}^3$. In Example \ref{ex:Bloch}, we have derived the moment dynamics of the Bloch ensemble, then following the control design procedure shown above Theorem \ref{thm:moment-feedback} yields
\begin{align}
    \begin{cases}
    u(t)=-\sum_{j=1}^N(e_{1,j}(t)m_{3,j+1}(t)-e_{3,j}(t)m_{1,j+1}(t))\\
    v(t)=0
    \end{cases}
    \label{eq:control_Bloch}
\end{align}
where $e_{i,j}$ and $m_{i,j}$ denote the $i^{\rm th}$ components of $e_j$ and $m_j$, respectively. 

In the simulation, we choose $N=35$ and apply the designed control inputs in \eqref{eq:control_Bloch} to both the Bloch ensemble and its moment system. The simulation results are shown in Figures \ref{fig:bloch_moments} and \ref{fig:bloch_states}. In particular, Figure \ref{fig:bloch_moments} shows the moment trajectory $m(t)$ and error trajectory $e(t)$ obtained by applying the designed control inputs to the moment system in \eqref{eq:bloch_moment_dyn}, and Figure \ref{fig:bloch_states} shows the state trajectories of 300 individual systems in the Bloch ensemble and the control inputs as functions of the time $t$. Moreover, we observe from these two figures that the Bloch ensemble and its moment system are simultaneously steered to neighborhoods of $x(T,\beta)$ and $m_F$, respectively, which in turn validates the proposed moment-feedback control design method.

%we begin with defining a candidate Lyapunov function $$L(e)=\frac{1}{2} \sum_{j=1}^N e_j^{'}(t)e_j(t),$$ where $e_j=(e_{1,j},e_{2,j},e_{3,j})'$, and follow the procedure described in Section \ref{subsec:lyap}. Note that, for the purpose of illustration, we consider steering the moment system of order upto $N$, where $N$ can be arbitrarily large. In particular, we use the Lyapunov function 
% \paragraph*{Bloch System:}
%\begin{align}
%    V(t) = \sum_{j=1}^N \frac{1}{2}e_j^{'}(t)e_j(t),
%\end{align}
%with $N=150$.

%Taking the time-derivative, and using the moment dynamics \eqref{eq:bloch_moment_dyn}  reveals a feasible control input pair $(u(t),v(t))$ that stabilizes the $e_j(t), j =1,\ldots,N$ about the origin (such that $\dot{V}\le 0$), and they are given by $v(t)=0$, 
%\begin{align}
%u(t) = -\sum_{j=1}^N (e_{1,j}(t)m_{3,j+1}(t)-e_{3,j}(t)m_{1,j+1}(t)). \label{eq:control_Bloch}
%\end{align}

%The resulting control input is applied to the moment system and the moment sequence trajectories converge to a neighborhood of $m_F$, and the resulting error trajectories are shown in Fig. \ref{fig:bloch_moments}. The same control input, when applied to the ensemble Bloch system, yielded the desired steering, i.e., a $\frac{\pi}{2}$ rotation about the $y$-axis, as shown in Fig. \ref{fig:bloch_states}. The steady state error observed is due to the choice of $L(e)$, and the resulting control input which caused the errors converging to an invariant set characterized by $\dot{V}(t)=0$.
%============================ Figure 3 =================
\begin{figure}
    \centering
    \includegraphics[width=0.85\columnwidth,keepaspectratio]{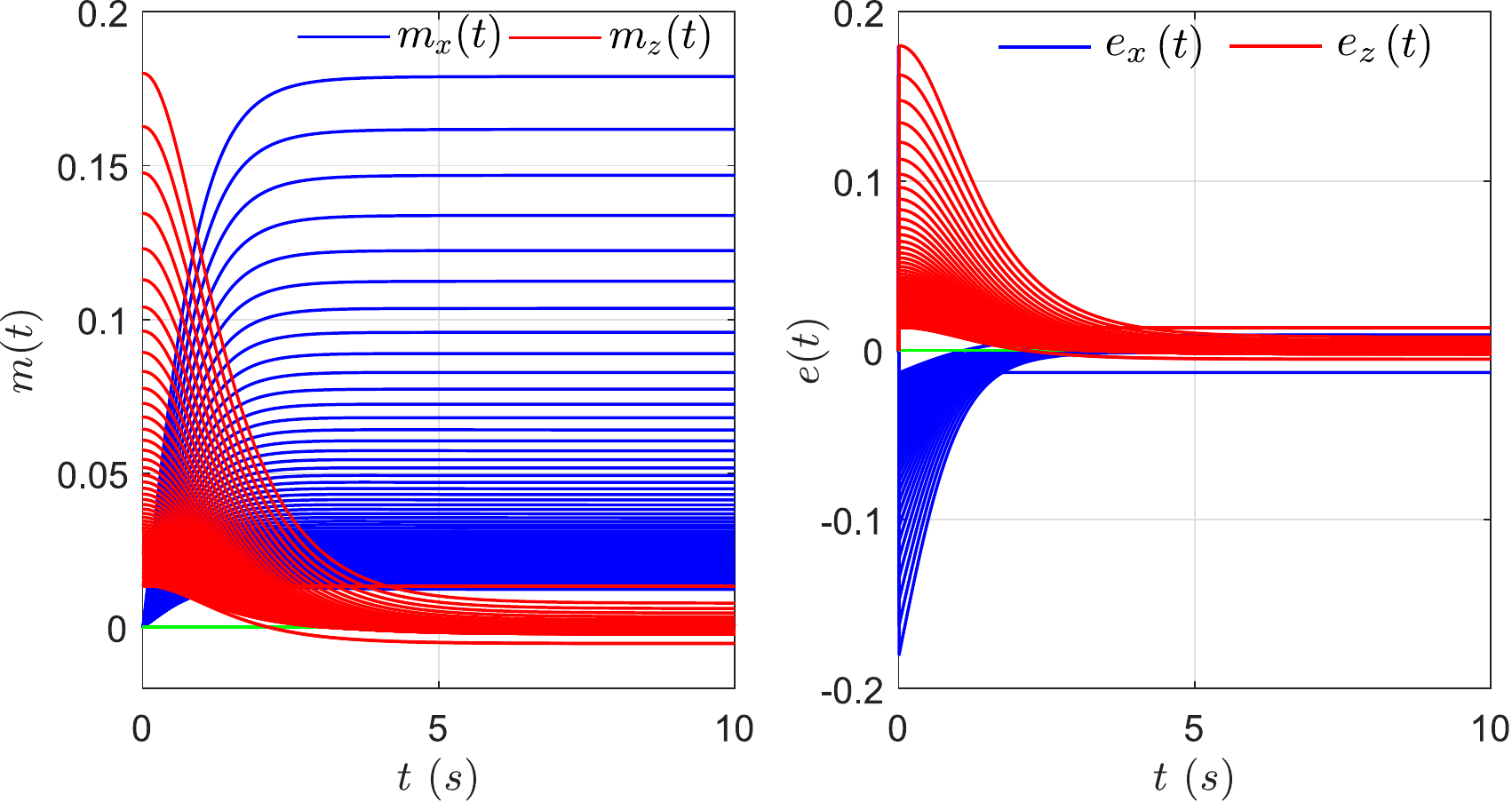}
    \caption{\footnotesize \noindent (Left) States of the controlled moment system of $(N= 35)$. The moment sequences corresponding to $x_1,x_3$ are denoted as $m_x,m_z$, respectively. (Right) Error trajectories of moment sequences, i.e., $m(t)-m_F$. The error trajectories corresponding to $m_x,m_z$ are denoted as $e_x,e_z$, respectively. Control law was derived using the Lyapunov approach, resulting in $\dot{V}(t) \le 0$. Thus, stabilizing $e(t)$ in a neighborhood of 0.}
    \label{fig:bloch_moments}
\end{figure}
%========================================================
%===============================  Figure 4 =============
\begin{figure}
    \centering
    \includegraphics[width=0.97\columnwidth,keepaspectratio]{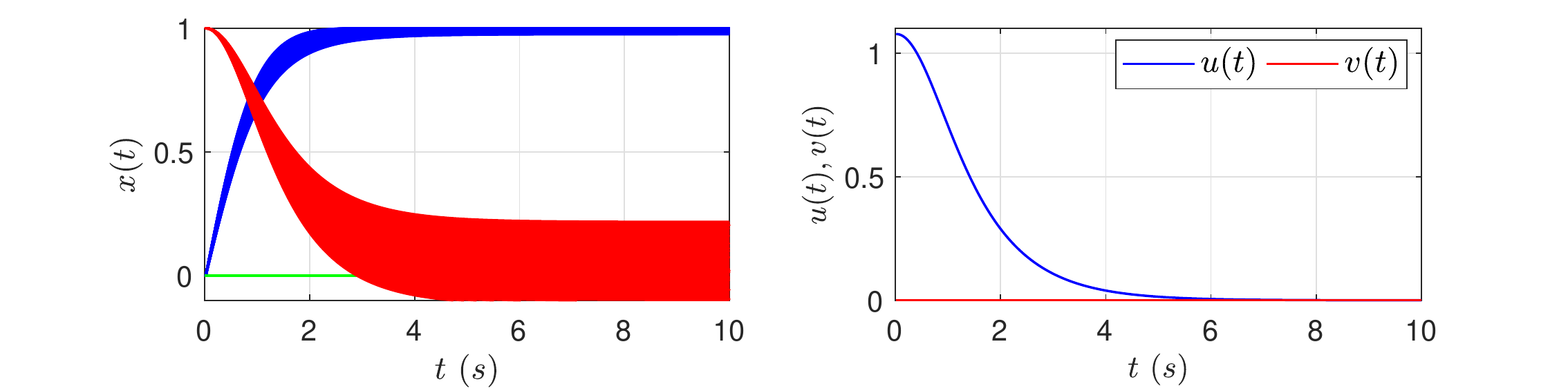}
    \caption{\footnotesize \noindent (Left) States of the controlled ensemble Bloch system. (Right) The feedback control trajectories designed using the moment stabilization approach. The parameter space was discretized to get 300 Bloch systems.}
    \label{fig:bloch_states}
\end{figure}
\end{ex}
%======================================================

%\paragraph*{Example 2} 
\begin{ex}
In this example, we revisit the ensemble system in \eqref{eq:nonlinear_ex} in Example \ref{ex:Nonlinear_ensemble} with the control task to steer the system from $x_0(\beta)=(2,1)$ to $x_F(\beta)=(1,0)$, whose moment sequences are denoted by $m_0$ and $m_F$, respectively, uniformly for all $\beta\in[0.5,1]$. 
%Here we consider a nonlinear ensemble system as described in Example 2. The control task was to steer the initial states $x_I=(2,1)'$ to $x_F=(1,0)'$ for all $\beta$. The control input that achieves this task can be obtained by using the associated moment system and the control that steers $m_0$ to $m_F$. In particular, we consider a 50\% variation in the dispersion parameter, i.e., $\beta \in[0.5,1]$. %with the dynamics given by 
% \begin{align}
%     \dot{z}(t,\beta) = f(z,\beta)+B(\beta)u(t),
% \end{align}
% where $z=(x,y)'$, $f=\beta \Big[\begin{matrix}y \\ -y-\sin{x}\end{matrix}\Big]$ and $B=(0,\beta)'$

Similar to the previous example, we still use the Lyapunov function $V(t)=L(e(t))=\frac{1}{2} \sum_{j=1}^N e_j'(t)e_j(t)$ with $N=50$, and determine a feasible control input following the same moment-feedback control design procedure.
%described in Section \ref{subsec:lyap}. 
The resulting control input, $u(t) = -\sum_{j=1}^N (5e_{1,j}(t)+e_{2,j}(t))$, was then applied to both of the ensemble and moment systems. The phase portraits of 500 systems in the ensemble and the error trajectory of the moment system are shown in Figure \ref{fig:inv_pend_moment_errors}. In addition, the control input as a function of $t$ and the resulting final profile are documented in Figure \ref{fig:inv_pend_phase}, which also demonstrates that the final profile obtained by applying the designed control input to the system accurately approximates the desired profile.

%Figure \ref{fig:inv_pend_moment_errors} illustrates the simulation results, in which we show the control input as a function of the time $t$ and the final profile of the ensemble system as a function of the parameter $\beta$. \cb{In particular, the control input stabilizes to 0 and the final profile accurately approximates the desired profile. }

%system and the resulting moment error and the phase trajectories of the ensemble system are shown in Fig. \ref{fig:inv_pend_moment_errors}. The control input and the resulting final profiles are recorded in Fig. \ref{fig:inv_pend_phase}, demonstrating that the control law obtained using the moment system steered the system from $x_I$ to $x_F$ successfully. 

%========================== Figure 5 ========================
\begin{figure}
    \centering
    \includegraphics[width=0.95\columnwidth,keepaspectratio]{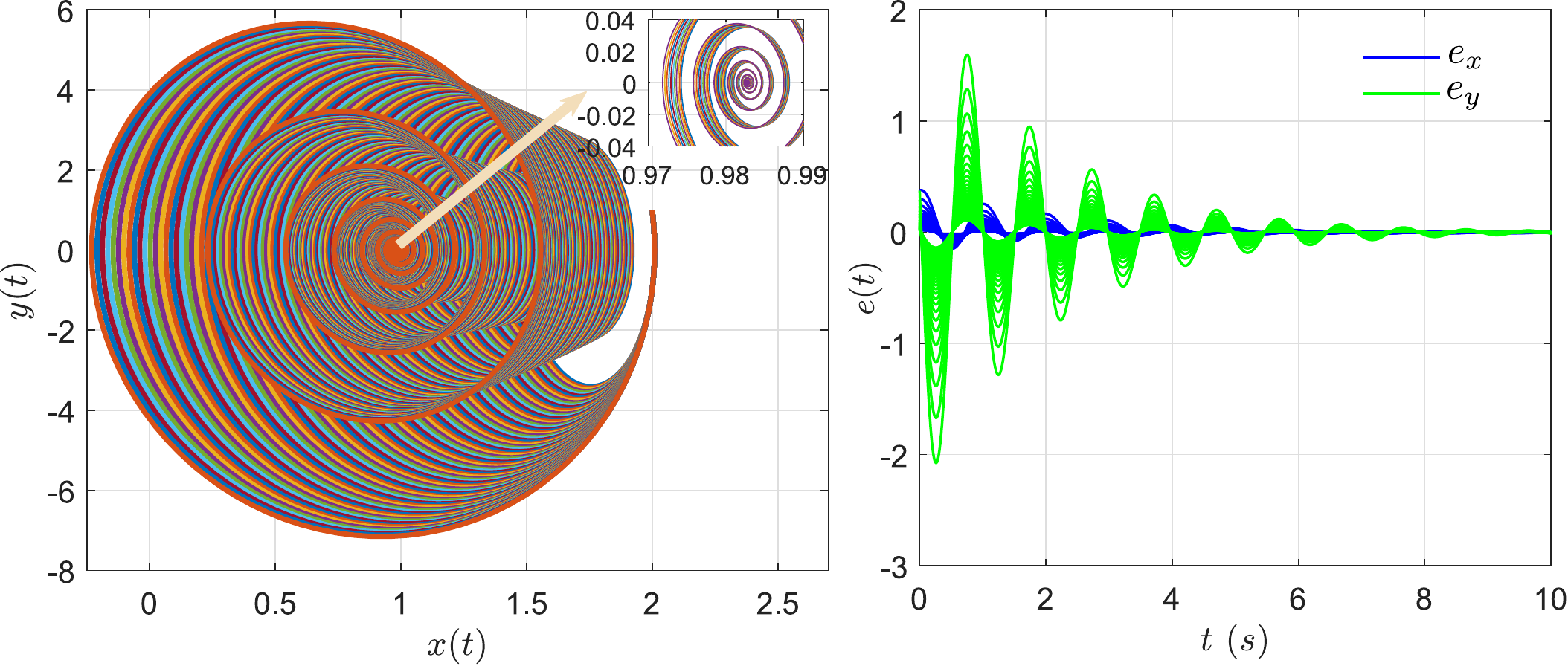}
    \caption{\footnotesize \noindent (Left) Phase trajectories of the controlled ensemble system. The parameter space was uniformly discretized to yield 500 systems. (Right) Error trajectories of the moment sequences, where we considered the moments of order up to 50. The errors corresponding to the moments sequences associated with the states $x,y$ are denoted by $e_x,e_y$, respectively.}
    \label{fig:inv_pend_moment_errors}
\end{figure}
%==============================================================
%=========================  Figure 6 =========================
\begin{figure}
    \centering
    \includegraphics[width=0.85\columnwidth,keepaspectratio]{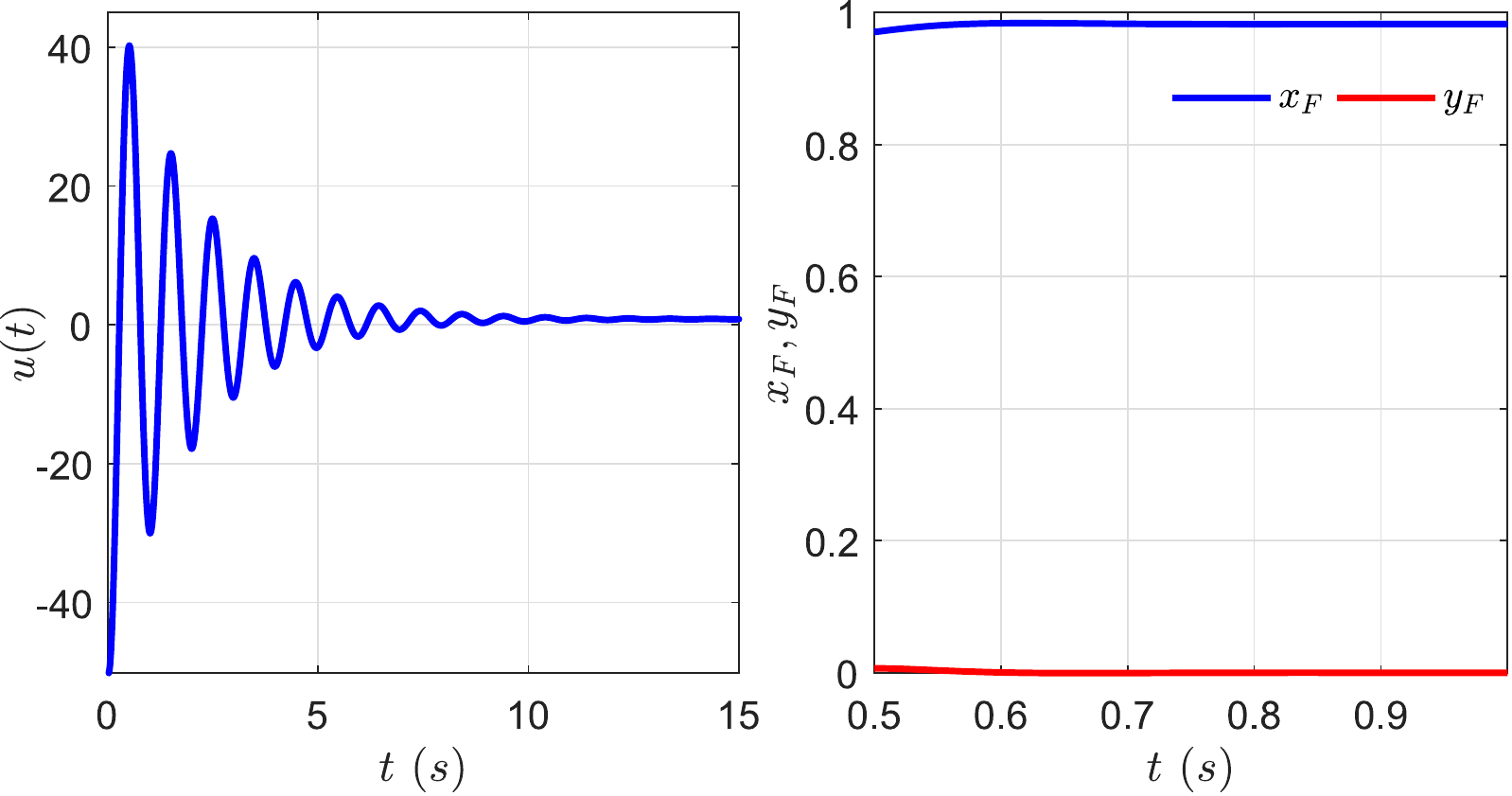}
    \caption{\footnotesize \noindent (Left) Control trajectory for the ensemble system computed as a linear combination of ensemble moments up to order 50. (Right) Final profile of the states of the ensemble system.}
    \label{fig:inv_pend_phase}
\end{figure}
\end{ex}

These numerical and simulation examples illustrate the application of the proposed approach in accommodating aggregated measurements to close the feedback loop in ensemble control systems. %{Because of the statistical interpretation of the ensemble control systems, the established moment-based control framework facilitates the use of aggregated measurements to approximate ensemble or output moments  with unbiased sample moments to tractably synthesize feedback controls for ensemble systems.}
% %======================= Section:  Conclusions ===================
 \section{Conclusions}\label{sec:conclusions}
% %==================================================
% In this work, we have presented a cohesive perspective and methodology for optimal control of inhomogeneous ensembles, as particularly motivated by compelling problems in quantum control and extendable to both parameterized systems in, for example, neuroscience [22] and uncertain systems throughout science and engineering. Such systems are mathematically characterized by considering a parameterized family of differential equations indexed by a parameter vector that shows variation. Applying this rigorous framework prompts us to solve the corresponding optimal control problems with computational methods of particular form. The notion of polynomial approximation entering into the controllability analysis of the Bloch equations indicates that a modified pseudospectral method is a prime candidate. The method has natural extensions which we develop to model ensemble variation. This direct collocation method transforms the continuous-time optimal control problem into an algebraic nonlinear programming problem, which we show to be effective in a variety of applications. We supplied additional and more general arguments for the convergence of this method, in particular relaxing several assumptions and discussing the convergence characteristic of the multidimensional pseudospectral method for optimal ensemble control.

In this paper, we develop a moment-based ensemble control framework to close the feedback loop in ensemble control systems using population level, aggregated measurements, which are available in many emerging applications. In particular, we integrate the method of moments in probability and statistics into control theory by interpreting deterministic ensemble systems from the probabilistic viewpoint. The major tool developed in this work is the dynamic Hausdorff moment problem, that is, an extension of the classical Hausdorff moment problem from the aspects of differential geometry and dynamical systems theory, through which we show that ensemble systems and their moment systems share the same control and controllability property. These theoretical investigations lay the foundation for the design of moment-feedback control laws for ensemble systems. Furthermore, this controllability-preserving moment transformation simultaneously accomplishes the task of model reduction of ensemble systems containing uncountable many individual systems to networks containing countably many agents. This nature of the proposed framework further sheds lights on the utilization of the moment-based control for ensemble systems with a focus on computational aspects of control synthesis such as truncation of moments systems, approximation of ensemble and output moments with unbiased sample moments, and their connections to mean-field theory.  %\cb{This nature of the proposed framework sheds light on its applications to the study of machine learning and data mining problems involving high-dimensional time-series data.}

\appendices
%===========================================
\section*{Appendix}
\subsection{Proof of Proposition \ref{prop:Hausdorff_2d}}
\label{appd:Hausdorff_2d}
We first prove the necessity, that is, the moment sequence $m_k$, $k\in\mathbb{N}^2$ of a function $\varphi\in L^2(\O)$ satisfies the inequality in \eqref{eq:Hausdorff_2d}. To simplify the notation, we use the multi-index notations that $\Delta^nm_k=\Delta^{n_1-k_1}_1\Delta^{n_2-k_1}_1m_{k_1k_2}$ and $\b^k(1-\b)^{n-k}=\b_1^{k_1}\beta_2^{k_2}(1-\b_1)^{n-k_1}(1-\b_2)^{n-k_2}$. Comparing the expression of $\Delta^{n-k}m_k$ with the power series expansion of $\b^k(1-\b)^{n-k}$, we obtain an integral representation of $\Delta^{n-k}m_k$ as $\Delta^{n-k}m_k=\int_\O\b^k(1-\b)^{n-k}\varphi(\b)d\b$.
Then, the H\"{o}lder's inequality yields
\begin{align*}
|\Delta^{n-k}m_k|^2&\leq \int_\O\b^k(1-\b)^{n-k}d\b\int_\O\b^k(1-\b)^{n-k}\varphi^2(\b)d\b\\
&=\frac{\int_\O\b^k(1-\b)^{n-k}\varphi^2(\b)d\b}{(n+1){n \choose k}}.
\end{align*}
Therefore, we obtain $(n+1)\sum_{k=0}^n\Big[{n \choose k}\Delta^{n-k}m_k\Big]^2\leq\int_\O\Big[\sum_{k=0}^n{n \choose k}\b^k(1-\b)^{n-k}\Big]\varphi^2(\b)d\b=\int_\O\varphi^2(\b)d\b=\|\varphi\|^2_2$,
%\begin{align*}
%&(n+1)\sum_{k=0}^n\Big[{n \choose k}\Delta^{n-k}m_k\Big]^2\\
%&\leq\int_\O\Big[\sum_{k=0}^n{n \choose k}\b^k(1-\b)^{n-k}\Big]\varphi^2(\b)d\b\\
%&=\int_\O\varphi^2(\b)d\b=\|\varphi\|^2_2,
%\end{align*}
where $\sum_{k=0}^n{n \choose k}\b^k(1-\b)^{n-k}
=\sum_{k_1=0}^{n_1}{n_1 \choose k_1}\b_1^{k_1}(1-\b_1)^{n_1-k_1}\sum_{k_2=0}^{n_2}{n_2 \choose k_2}\b_2^{k_2}(1-\b_2)^{n_2-k_2}
=(\b_1+1-\b_1)^{n_1}\cdot(\b_2+1-\b_2)^{n_2}=1$
%\begin{align*}
%&\sum_{k=0}^n{n \choose k}\b^k(1-\b)^{n-k}\\
%&=\sum_{k_1=0}^{n_1}{n_1 \choose k_1}\b_1^{k_1}(1-\b_1)^{n_1-k_1}\sum_{k_2=0}^{n_2}{n_2 \choose k_2}\b_2^{k_2}(1-\b_2)^{n_2-k_2}\\
%&=(\b_1+1-\b_1)^{n_1}\cdot(\b_2+1-\b_2)^{n_2}=1
%\end{align*}
by the binomial theorem.

Conversely, to show the sufficiency, we assume the double sequence $m_k$, $k\in\mathbb{N}^2$ satisfies the inequality in \eqref{eq:Hausdorff_2d}. Then, the sequence $m_k$ also satisfies the inequality in Lemma \ref{lem:Hausdorff_2d} because taking $n=(n_0,n_0)$ yields $\sum_{k_1,k_2=0}^{n_0}{n_0\choose k_1}{n_0\choose k_2}|\Delta^{n-k}m_k|
\leq (n_0+1)\sqrt{\sum_{k_1,k_2=0}^{n_0}\Big[{n_0\choose k_1}{n_0\choose k_2}\Delta^{n-k}m_k\Big]^2}\leq\sqrt{C}$,
%\begin{align*}
%&\sum_{k_1,k_2=0}^{n_0}{n_0\choose k_1}{n_0\choose k_2}|\Delta^{n-k}m_k|\\
%&\leq (n_0+1)\sqrt{\sum_{k_1,k_2=0}^{n_0}\Big[{n_0\choose k_1}{n_0\choose k_2}\Delta^{n-k}m_k\Big]^2}\leq\sqrt{C},
%\end{align*}
which implies the existence of a signed Borel measure $\mu$ on $\O$ such that $m_k=\int_\O\b^kd\mu(\b)$. Next, pick any $f\in L^2(\O)$, because continuous functions are dense in $L^2$-functions, there is a sequence of continuous functions $f_m$ such that $f_m\rightarrow f$ in $L^2(\O)$. Let $B_n(f_m)=\sum_{k=0}^nf_m\big(\frac{k}{n}\big){n\choose k}\b^k(1-\b)^{n-k}=\sum_{(k_1,k_2)=(0,0)}^{(n_1,n_2)}f_m\big(\frac{k_1}{n_1},\frac{k_2}{n_2}\big){n_1\choose k_1}{n_2\choose k_2}\b_1^{k_1}(1-\b_1)^{n_1-k_1}\b_2^{k_2}(1-\b_2)^{n_2-k_2}$ be the $(n_1,n_2)^{\rm th}$ Bernstein polynomial induced by $f_m$, then $B_n(f_m)\rightarrow f_m$ uniformly as $n_1,n_2\rightarrow\infty$ for each $m$. Note that because $\mu$ is the representing measure of $m_k$, we also have
\begin{align*}
\int_\O B_n(f_m)d\mu&=\sum_{k=0}^nf_m\Big(\frac{k}{n}\Big){n\choose k}\Delta^{n-k}m_k\\
&\leq\sqrt{\sum_{k=0}^n\frac{f_m^2\big(\frac{k}{n}\big)}{n+1}}\sqrt{\sum_{k=0}^n\big[{n\choose k}\Delta^{n-k}m_k\big]^2}\\
&\leq\sqrt{C}\sqrt{\sum_{k=0}^n\frac{f_m^2\big(\frac{k}{n}\big)}{n+1}}.
\end{align*}
Taking the limit as $n_1,n_2\rightarrow\infty$ from both sides and changing the order of the integral and limit on the left-hand side by the uniform convergence, we have $\int_\O f_m d\mu=\sqrt{C\int_\O f_m^2(\b)d\b}$
by the definition of Riemann integrals. Again, taking limit as $m\rightarrow\infty$ and changing the order of integrals and limits from both sides by the dominant convergence theorem, it follows $\int_\O f_m d\mu\leq\sqrt{C\int_\O f(\b)d\b}=\sqrt{C}\|f\|_2$,
which implies the map $f\mapsto\int_\O fd\mu$ defines a bounded linear functional on $L^2(\O)$, i.e., an element in the dual space $L^2(\O)^*$. Because $L^2(\O)=L^2(\O)^*$, we conclude $\int_\O fd\mu=\int_\O f(\b)\varphi(\b)d\b$ for some $\varphi\in L^2(\O)$. Since $f\in L^{2}(\O)$ is arbitrary, $d\mu(\b)=\varphi(\b)d\b$ holds, which also concludes the proof of the sufficiency. The uniqueness of $\varphi$ directly follows from the density of polynomials in $L^2(\O)$.

\bibliographystyle{IEEEtran}
\bibliography{References}

% Generated by IEEEtran.bst, version: 1.14 (2015/08/26)
\begin{thebibliography}{10}
\providecommand{\url}[1]{#1}
\csname url@samestyle\endcsname
\providecommand{\newblock}{\relax}
\providecommand{\bibinfo}[2]{#2}
\providecommand{\BIBentrySTDinterwordspacing}{\spaceskip=0pt\relax}
\providecommand{\BIBentryALTinterwordstretchfactor}{4}
\providecommand{\BIBentryALTinterwordspacing}{\spaceskip=\fontdimen2\font plus
\BIBentryALTinterwordstretchfactor\fontdimen3\font minus
  \fontdimen4\font\relax}
\providecommand{\BIBforeignlanguage}[2]{{%
\expandafter\ifx\csname l@#1\endcsname\relax
\typeout{** WARNING: IEEEtran.bst: No hyphenation pattern has been}%
\typeout{** loaded for the language `#1'. Using the pattern for}%
\typeout{** the default language instead.}%
\else
\language=\csname l@#1\endcsname
\fi
#2}}
\providecommand{\BIBdecl}{\relax}
\BIBdecl

\bibitem{Dong10}
D.~Dong and I.~R. Petersen, ``Quantum control theory and applications: a
  survey,'' \emph{IET Control Theory \& Applications}, vol.~4, no.~20, pp.
  2651--2671, 2010.

\bibitem{Li_PNAS11}
J.-S. Li, J.~Ruths, T.-Y. Yu, H.~Arthanari, and G.~Wagner, ``Optimal pulse
  design in quantum control: A unified computational method,''
  \emph{Proceedings of the National Academy of Sciences}, vol. 108, no.~5, pp.
  1879--1884, 2011.

\bibitem{bomela_18_tcl}
W.~Bomela, A.~Zlotnik, and J.-S. Li, ``A phase model approach for
  thermostatically controlled load demand response,'' \emph{Applied Energy},
  vol. 228, pp. 667--680, 2018.

\bibitem{ching_13_control}
S.~Ching and J.~Ritt, ``Control strategies for underactuated neural ensembles
  driven by optogenetic stimulation,'' \emph{Frontiers in neural circuits},
  vol.~7, p.~54, 2013.

\bibitem{li_13_control}
J.-S. Li, I.~Dasanayake, and J.~Ruths, ``Control and synchronization of neuron
  ensembles,'' \emph{IEEE Transactions on automatic control}, vol.~58, no.~8,
  pp. 1919--1930, 2013.

\bibitem{smale_07_emergents}
F.~Cucker and S.~Smale, ``Emergent behavior in flocks,'' \emph{IEEE
  Transactions on automatic control}, vol.~52, no.~5, pp. 852--862, 2007.

\bibitem{becker_12_robotics}
A.~Becker and T.~Bretl, ``Approximate steering of a unicycle under bounded
  model perturbation using ensemble control,'' \emph{IEEE Transactions on
  Robotics}, vol.~28, no.~3, pp. 580--591, 2012.

\bibitem{li_10_ltv}
J.-S. Li, ``Ensemble control of finite-dimensional time-varying linear
  systems,'' \emph{IEEE Transactions on Automatic Control}, vol.~56, no.~2, pp.
  345--357, 2010.

\bibitem{brockett_2000_stochastic}
R.~Brockett and N.~Khaneja, ``On the stochastic control of quantum ensembles,''
  in \emph{System theory}.\hskip 1em plus 0.5em minus 0.4em\relax Springer,
  2000, pp. 75--96.

\bibitem{Helmke14}
U.~Helmke and M.~Schonlein, ``Uniform ensemble controllability for
  one-parameter families of time-invariant linear systems,'' \emph{Systems \&
  Control Letters}, vol.~71, pp. 69--77, 2014.

\bibitem{Li_TAC16}
J.-S. Li and J.~Qi, ``Ensemble control of time-invariant linear systems with
  linear parameter variation,'' \emph{IEEE Transactions on Automatic Control},
  vol.~61, no.~10, pp. 2808 -- 2820, Oct. 2016.

\bibitem{Dirr18}
G.~Dirr and M.~Sch\"{o}nlein, ``Uniform and $l^q$-ensemble reachability of
  parameter-dependent linear systems,'' 2018.

\bibitem{wei_18_controllability}
W.~Zhang and J.-S. Li, ``On controllability of time-varying linear population
  systems with parameters in unbounded sets,'' \emph{Systems \& Control
  Letters}, vol. 118, pp. 94--100, 2018.

\bibitem{zeng_16_moment}
S.~Zeng and F.~Allgoewer, ``A moment-based approach to ensemble controllability
  of linear systems,'' \emph{Systems \& Control Letters}, vol.~98, pp. 49--56,
  2016.

\bibitem{zeng_15_ensemble}
S.~Zeng, S.~Waldherr, C.~Ebenbauer, and F.~Allg{\"o}wer, ``Ensemble
  observability of linear systems,'' \emph{IEEE Transactions on Automatic
  Control}, vol.~61, no.~6, pp. 1452--1465, 2015.

\bibitem{zeng_16_sampled}
S.~Zeng, H.~Ishii, and F.~Allg{\"o}wer, ``Sampled observability and state
  estimation of linear discrete ensembles,'' \emph{IEEE Transactions on
  Automatic Control}, vol.~62, no.~5, pp. 2406--2418, 2016.

\bibitem{Li_SICON19}
J.-S. Li, W.~Zhang, and L.~Tie, ``On separating points for ensemble
  controllability,'' \emph{arXiv:1908.05323 [math.OC]}, 2019.

\bibitem{li_09_bloch}
J.-S. Li and N.~Khaneja, ``Ensemble control of bloch equations,'' \emph{IEEE
  Transactions on Automatic Control}, vol.~54, no.~3, pp. 528--536, 2009.

\bibitem{Beauchard10}
K.~Beauchard, J.-M. Coron, and P.~Rouchon, ``Controllability issues for
  continuous-spectrum systems and ensemble controllability of bloch
  equations,'' \emph{Communications in Mathematical Physics}, vol. 296, no.~2,
  pp. 525--557, 2010.

\bibitem{chen_19_controllability}
X.~Chen, ``Controllability of continuum ensemble of formation systems over
  directed graphs,'' \emph{Automatica}, vol. 108, p. 108497, 2019.

\bibitem{chen_20_ensemble}
------, ``Ensemble observability of bloch equations with unknown population
  density,'' \emph{Automatica}, vol. 119, p. 109057, 2020.

\bibitem{chen_19_structure}
------, ``Structure theory for ensemble controllability, observability, and
  duality,'' \emph{Mathematics of Control, Signals, and Systems}, vol.~31,
  no.~2, p.~7, 2019.

\bibitem{Li_TAC19}
W.~Zhang and J.-S. Li, ``Analyzing controllability of bilinear systems on
  symmetric groups: Mapping lie brackets to permutations,'' \emph{IEEE
  Transactions on Automatic Control}, 2019.

\bibitem{Li_SICON20_arXiv}
------, ``Ensemble control on {Lie} groups,'' \emph{arXiv preprint
  arXiv:2008.03243}, 2020.

\bibitem{Li_ACC12_SVD}
A.~Zlotnik and J.-S. Li, ``Synthesis of optimal ensemble controls for linear
  systems using the singular value decomposition,'' in \emph{2012 American
  Control conference}, Montreal, June 2012.

\bibitem{Gong16}
C.~Phelps, J.~O. Royset, and Q.~Gong, ``Optimal control of uncertain systems
  using sample average approximations,'' \emph{SIAM Journal on Control and
  Optimization}, vol.~54, no.~1, pp. 1--29, 2016.

\bibitem{Li_SICON17}
S.~Wang and J.-S. Li, ``Fixed-endpoint optimal control of bilinear ensemble
  systems,'' \emph{SIAM Journal on Control and Optimization}, vol.~55, no.~5,
  pp. 3039--3065, 2017.

\bibitem{Li_Automatica18}
------, ``Free-endpoint optimal control of inhomogeneous bilinear ensemble
  systems,'' \emph{Automatica}, vol.~95, pp. 306--315, September 2018.

\bibitem{wei_15_uniform}
W.~Zhang and J.-S. Li, ``Uniform and selective excitations of spin ensembles
  with rf inhomogeneity,'' in \emph{2015 54th IEEE Conference on Decision and
  Control (CDC)}.\hskip 1em plus 0.5em minus 0.4em\relax IEEE, 2015, pp.
  5766--5771.

\bibitem{zlotnik_16_phase}
A.~Zlotnik, R.~Nagao, I.~Z. Kiss, and J.-S. Li, ``Phase-selective entrainment
  of nonlinear oscillator ensembles,'' \emph{Nature communications}, vol.~7, p.
  10788, 2016.

\bibitem{zlotnik_12_neuron}
A.~Zlotnik and J.-S. Li, ``Optimal entrainment of neural oscillator
  ensembles,'' \emph{Journal of neural engineering}, vol.~9, no.~4, p. 046015,
  2012.

\bibitem{kuritz_18_ensemble}
K.~Kuritz, S.~Zeng, and F.~Allg{\"o}wer, ``Ensemble controllability of cellular
  oscillators,'' \emph{IEEE Control Systems Letters}, vol.~3, no.~2, pp.
  296--301, 2018.

\bibitem{zeng_18_computation}
S.~Zeng, W.~Zhang, and J.-S. Li, ``On the computation of control inputs for
  linear ensembles,'' in \emph{2018 Annual American Control Conference
  (ACC)}.\hskip 1em plus 0.5em minus 0.4em\relax IEEE, 2018, pp. 6101--6107.

\bibitem{chen_18_OT}
Y.~Chen and J.~Karlsson, ``State tracking of linear ensembles via optimal mass
  transport,'' \emph{IEEE Control Systems Letters}, vol.~2, no.~2, pp.
  260--265, 2018.

\bibitem{hong_19_novel}
G.~Hong and C.~M. Lieber, ``Novel electrode technologies for neural
  recordings,'' \emph{Nature Reviews Neuroscience}, vol.~20, no.~6, pp.
  330--345, 2019.

\bibitem{couceiro_12_low}
M.~S. Couceiro, C.~M. Figueiredo, J.~M.~A. Luz, N.~M. Ferreira, and R.~P.
  Rocha, ``A low-cost educational platform for swarm robotics.''
  \emph{International Journal of Robots, Education \& Art}, vol.~2, no.~1,
  2012.

\bibitem{quantum_measurement_prl_19}
\BIBentryALTinterwordspacing
H.~Huang, J.~Kewisch, C.~Liu, A.~Marusic, W.~Meng, F.~M\'eot, P.~Oddo,
  V.~Ptitsyn, V.~Ranjbar, T.~Roser, and W.~B. Schmidke, ``Measurement of the
  spin tune using the coherent spin motion of polarized protons in a storage
  ring,'' \emph{Phys. Rev. Lett.}, vol. 122, p. 204803, May 2019. [Online].
  Available: \url{https://link.aps.org/doi/10.1103/PhysRevLett.122.204803}
\BIBentrySTDinterwordspacing

\bibitem{brockett_76_parametrically}
R.~W. Brockett, ``Parametrically stochastic linear differential equations,'' in
  \emph{Stochastic Systems: Modeling, Identification and Optimization,
  I}.\hskip 1em plus 0.5em minus 0.4em\relax Springer, 1976, pp. 8--21.

\bibitem{yang_15_shapingcrowds}
Y.~Yang, D.~V. Dimarogonas, and X.~Hu, ``Shaping up crowd of agents through
  controlling their statistical moments,'' in \emph{2015 European Control
  Conference (ECC)}.\hskip 1em plus 0.5em minus 0.4em\relax IEEE, 2015, pp.
  1017--1022.

\bibitem{zhang_18_modellingCollectBehavrMoment}
S.~Zhang, A.~Ringh, X.~Hu, and J.~Karlsson, ``A moment-based approach to
  modeling collective behaviors,'' in \emph{2018 IEEE Conference on Decision
  and Control (CDC)}.\hskip 1em plus 0.5em minus 0.4em\relax IEEE, 2018, pp.
  1681--1687.

\bibitem{singh_10_approximateChemical}
A.~Singh and J.~P. Hespanha, ``Approximate moment dynamics for chemically
  reacting systems,'' \emph{IEEE Transactions on Automatic Control}, vol.~56,
  no.~2, pp. 414--418, 2010.

\bibitem{brockett_12_notes}
R.~Brockett, ``Notes on the control of the liouville equation,'' in
  \emph{Control of partial differential equations}.\hskip 1em plus 0.5em minus
  0.4em\relax Springer, 2012, pp. 101--129.

\bibitem{dirr_16_controlling}
G.~Dirr, U.~Helmke, and M.~Sch{\"o}nlein, ``Controlling mean and variance in
  ensembles of linear systems,'' \emph{IFAC-PapersOnLine}, vol.~49, no.~18, pp.
  1018--1023, 2016.

\bibitem{zeng_16_momentdyn}
S.~Zeng and F.~Allg{\"o}wer, ``On the moment dynamics of discrete measures,''
  in \emph{2016 IEEE 55th Conference on Decision and Control (CDC)}.\hskip 1em
  plus 0.5em minus 0.4em\relax IEEE, 2016, pp. 4901--4906.

\bibitem{brockett_14_early}
R.~Brockett, ``The early days of geometric nonlinear control,''
  \emph{Automatica}, vol.~50, no.~9, pp. 2203--2224, 2014.

\bibitem{akhiezer_65_classical}
N.~I. Akhiezer and N.~Kemmer, \emph{The classical moment problem: and some
  related questions in analysis}.\hskip 1em plus 0.5em minus 0.4em\relax Oliver
  \& Boyd Edinburgh, 1965, vol.~5.

\bibitem{Bogachev07}
V.~I. Bogachev, \emph{Measure Theory}.\hskip 1em plus 0.5em minus 0.4em\relax
  Springer-Verlag Berlin Heidelberg, 2007.

\bibitem{hausdorff_23_momentprobleme}
F.~Hausdorff, ``Momentprobleme f{\"u}r ein endliches intervall.''
  \emph{Mathematische Zeitschrift}, vol.~16, no.~1, pp. 220--248, 1923.

\bibitem{Hildebrandt33}
T.~H. Hildebrandt and I.~J. Schoenberg, ``On linear functional operations and
  the moment problem for a finite interval in one or several dimensions,''
  \emph{Annals of Mathematics, Second Series}, vol.~34, no.~2, pp. 317--328,
  1933.

\bibitem{folland_13_real}
G.~B. Folland, \emph{Real analysis: modern techniques and their
  applications}.\hskip 1em plus 0.5em minus 0.4em\relax John Wiley \& Sons,
  2013.

\end{thebibliography}
\end{document}